\documentclass[a4paper,12pt]{amsart}
%\usepackage{biblatex}
%\addbibresource{biblio-Loday.bib}
%\usepackage{natbib}
\usepackage{amsrefs}
\usepackage{mathtools}
\usepackage{amsmath,amssymb,amsthm}
\usepackage{amsaddr}
\usepackage{amsfonts}
\usepackage{graphicx}
\usepackage{braket}
\usepackage{chngcntr}
\usepackage{subcaption}
\counterwithin{figure}{section}
\usepackage[T1]{fontenc}
\usepackage{hyperref}
\usepackage{tikz}
\usepackage{graphicx}
\usepackage{mathtools}
\usetikzlibrary{arrows,shapes,positioning}
\usetikzlibrary{decorations.markings}
\tikzstyle arrowstyle=[scale=1]
\tikzstyle directed=[postaction={decorate,decoration={markings,
	mark=at position .55 with {\arrow[arrowstyle]{stealth}}}}]
\newtheorem{thm}{Theorem}
\newtheorem{lem}{Lemma}
\newtheorem{cor}{Corollary}
\newtheorem{defn}{Definition}
\newtheorem{prop}{Proposition}

\theoremstyle{definition}
\newtheorem{defn2}{Example}

\newcommand{\Complex}{\mathbb{C}}

\newcommand{\Real}{\mathbb{R}}
\newcommand{\tree}{%
	\vcenter{\hbox{\tikz[node distance=2.5ex]{%
				\draw[thick] (5,-0.20) -- (5,0) -- (4.85,0.10) -- (5,0) -- (5.15,0.10);
}}}}
\newcommand{\bridge}{\overset{\frown}{\vee}}

\newcommand{\oneloop}{%
	\vcenter{\hbox{\tikz[node distance=2.5ex]{%
				\draw[thick] (5,-0.20) -- (5,0) -- (4.85,0.10) -- (5,0) -- (5.15,0.10) ;\draw[thick] (5.15,0.13) arc (45:135:0.2cm);
}}}}
\newcommand{\onetwo}{%
	\vcenter{\hbox{\tikz[node distance=2.5ex]{%
				\draw[thick] (6.25,0.75) -- (6.25,1) -- (6,1.25) -- (6.125,1.125)--(6.25,1.25)--(6.125,1.125) -- (6.25,1) -- (6.5,1.25);
}}}}
\newcommand{\twoone}{%
	\vcenter{\hbox{\tikz[node distance=2.5ex]{%
				\draw[thick] (6.25,0.75) -- (6.25,1) -- (6,1.25) -- (6.125,1.125) -- (6.25,1) -- (6.375,1.125)--(6.25,1.25)--(6.375,1.125)--(6.5,1.25);
}}}}
\newcommand{\onetwoloopone}{%
	\vcenter{\hbox{\tikz[node distance=2.5ex]{%
				\draw[thick] (6.25,0.75) -- (6.25,1) -- (6,1.25) -- (6.125,1.125)--(6.25,1.25)--(6.125,1.125) -- (6.25,1) -- (6.5,1.25);\draw[thick] (6.25,1.3) arc (45:135:0.18cm);
}}}}
\newcommand{\onetwolooponeirreg}{%
	\vcenter{\hbox{\tikz[node distance=2.5ex]{%
				\draw[thick] (6.25,0.75) -- (6.25,1) -- (6,1.25) -- (6.125,1.125)--(6.25,1.25)--(6.125,1.125) -- (6.25,1) -- (6.5,1.25);\draw[thick] (6.25,1.3) arc (45:135:0.18cm);\draw[thick] (6.5,1.3) arc (45:135:0.18cm);
}}}}
\newcommand{\onetwolooptwo}{%
	\vcenter{\hbox{\tikz[node distance=2.5ex]{%
				\draw[thick] (6.25,0.75) -- (6.25,1) -- (6,1.25) -- (6.125,1.125)--(6.25,1.25)--(6.125,1.125) -- (6.25,1) -- (6.5,1.25);\draw[thick] (6.5,1.3) arc (45:135:0.15cm);
}}}}
\newcommand{\twooneloopone}{%
	\vcenter{\hbox{\tikz[node distance=2.5ex]{%
				\draw[thick] (6.25,0.75) -- (6.25,1) -- (6,1.25) -- (6.125,1.125) -- (6.25,1) -- (6.375,1.125)--(6.25,1.25)--(6.375,1.125)--(6.5,1.25);\draw[thick] (6.23,1.3) arc (45:135:0.17cm);
}}}}
\newcommand{\twoonelooponeirreg}{%
	\vcenter{\hbox{\tikz[node distance=2.5ex]{%
			\draw[thick] (6.25,0.75) -- (6.25,1) -- (6,1.25) -- (6.125,1.125) -- (6.25,1) -- (6.375,1.125)--(6.25,1.25)--(6.375,1.125)--(6.5,1.25);\draw[thick] (6.23,1.3) arc (45:135:0.17cm);\draw[thick] (6.5,1.3) arc (45:135:0.17cm);
}}}}
\newcommand{\twoonelooptwo}{%
	\vcenter{\hbox{\tikz[node distance=2.5ex]{%
				\draw[thick] (6.25,0.75) -- (6.25,1) -- (6,1.25) -- (6.125,1.125) -- (6.25,1) -- (6.375,1.125)--(6.25,1.25)--(6.375,1.125)--(6.5,1.25);\draw[thick] (6.5,1.3) arc (45:135:0.15cm);
}}}}
\newcommand{\threeoneloopone}{%
	\vcenter{\hbox{\tikz[node distance=2.5ex]{%
			\draw[thick] (4,0.5) -- (4,1) -- (3.5,1.5) -- (4,1)--(4.5,1.5)--(4.375,1.375)--(4.25,1.5)--(4.375,1.375)--(4.25,1.25)--(4,1.5);
			\draw[thick] (4.5,1.55) arc (30:150:0.15cm);
}}}}
\newcommand{\threeonelooptwo}{%
	\vcenter{\hbox{\tikz[node distance=2.5ex]{%
				\draw[thick] (1,0.5) -- (1,1) -- (0.5,1.5) -- (0.625,1.375)--(0.75,1.5)--(0.625,1.375)-- (1,1) --(1.375,1.375)--(1.25,1.5)--(1.375,1.375) -- (1.5,1.5);
				\draw[thick] (1.5,1.55) arc (30:150:0.15cm);
}}}}
\newcommand{\threetwoloopone}{%
	\vcenter{\hbox{\tikz[node distance=2.5ex]{%
				\draw[thick] (7,0.5) -- (7,1) -- (6.5,1.5) -- (7,1) -- (7.5,1.5)--(7.25,1.25)--(7.125,1.375)--(7.25,1.5)--(7.125,1.375)--(7,1.5);\draw[thick] (6.95,1.55) arc (30:150:0.27cm);
				\draw[thick] (7.5,1.55) arc (30:150:0.13cm);
}}}}
\newcommand{\threetwolooptwo}{%
	\vcenter{\hbox{\tikz[node distance=2.5ex]{%
				\draw[thick] (7,0.5) -- (7,1) -- (6.5,1.5) -- (6.75,1.25)--(7,1.5)--(6.875,1.375)--(6.75,1.5)-- (6.875,1.375) --(6.75,1.25)-- (7,1)-- (7.5,1.5);
				\draw[thick] (7.5,1.55) arc (45:135:0.36cm);\draw[thick] (6.75,1.55) arc (30:150:0.15cm);
}}}}
\newcommand{\threetwoloopthree}{%
	\vcenter{\hbox{\tikz[node distance=2.5ex]{%
					\draw[thick] (4,0.5) -- (4,1) -- (3.5,1.5) -- (4,1) -- (4.5,1.5)--(4.375,1.375)--(4.25,1.5)--(4.375,1.375)--(4.25,1.25)--(4,1.5);
				\draw[thick] (3.95,1.55) arc (30:150:0.27cm);\draw[thick] (4.5,1.55) arc (30:150:0.13cm);
}}}}
\newcommand{\fourtwooloopone}{%
	\vcenter{\hbox{\tikz[node distance=2.5ex]{%
			\draw[thick] (1,0.5) -- (1,1) -- (0.5,1.5)--(0.75,1.25)--(1,1.5)--(0.875,1.375)--(0.75,1.5) --(0.875,1.375)--(0.75,1.25)--(1,1)--(1.375,1.375)--(1.25,1.5)--(1.375,1.375) -- (1.5,1.5);
			\draw[thick] (0.75,1.55) arc (30:150:0.15cm);\draw[thick] (1.5,1.55) arc (30:150:0.15cm);
}}}}

\newcommand{\fourtwooloopthree}{%
	\vcenter{\hbox{\tikz[node distance=2.5ex]{%
				\draw[thick] (1,0.5) -- (1,1) --(0.5,1.5)--(1,1)--(1.375,1.375)--(1.25,1.5)--(1.375,1.375) -- (1.5,1.5)--(1.25,1.25)--(1,1.5)--(1.25,1.25)--(1.125,1.125)--(0.75,1.5);
				\draw[thick] (0.75,1.55) arc (30:150:0.15cm);\draw[thick] (1.5,1.55) arc (30:150:0.15cm);
}}}}
\newcommand{\fourtwooloopfour}{%
	\vcenter{\hbox{\tikz[node distance=2.5ex]{%
				\draw[thick] (1,0.5) -- (1,1) -- (0.5,1.5)--(1,1)--(1.125,1.125)--(0.875,1.375)--(1,1.5)--(0.875,1.375)--(0.75,1.5) --(0.875,1.375)--(1.125,1.125)--(1.375,1.375)--(1.25,1.5)--(1.375,1.375) -- (1.5,1.5);
				\draw[thick] (0.75,1.55) arc (30:150:0.15cm);\draw[thick] (1.5,1.55) arc (30:150:0.15cm);
}}}}
\newcommand{\twothreeonetwoloopirreg}{%
	\vcenter{\hbox{\tikz[node distance=2.5ex]{%
				\draw[thick] (7,0.5) -- (7,1) -- (6.5,1.5) -- (6.75,1.25)--(7,1.5)--(6.875,1.375)--(6.75,1.5)-- (6.875,1.375) --(6.75,1.25)-- (7,1)-- (7.5,1.5);
				\draw[thick] (7.5,1.55) arc (45:135:0.36cm);\draw[thick] (7,1.55) arc (30:150:0.15cm);
}}}}
\newcommand{\threeonetwotwoloopirreg}{%
	\vcenter{\hbox{\tikz[node distance=2.5ex]{%
				\draw[thick] (7,0.5) -- (7,1) -- (6.5,1.5) -- (7,1) -- (7.5,1.5)--(7.25,1.25)--(7.125,1.375)--(7.25,1.5)--(7.125,1.375)--(7,1.5);\draw[thick] (7.25,1.55) arc (30:150:0.13cm);
				\draw[thick] (7.5,1.55) arc (30:150:0.13cm);
}}}}
\newcommand{\threetwoonetwoloopirreg}{%
	\vcenter{\hbox{\tikz[node distance=2.5ex]{%
				\draw[thick] (4,0.5) -- (4,1) -- (3.5,1.5) -- (4,1) -- (4.5,1.5)--(4.375,1.375)--(4.25,1.5)--(4.375,1.375)--(4.25,1.25)--(4,1.5);
				\draw[thick] (4.25,1.55) arc (30:150:0.13cm);\draw[thick] (4.5,1.55) arc (30:150:0.13cm);
}}}}
% -----------------------------------------------------------
\begin{document}
\title{A quantization of the Loday-Ronco Hopf algebra}
\author{Jo\~{a}o N. Esteves%\\
%\href{https://orcid.org/0000-0003-0736-8302}{ORCID}
}
\address{CAMGSD, Departamento de Matem\'{a}tica, Instituto Superior T\'{e}cnico, Av. Rovisco Pais 1, 1049-001 Lisboa, Portugal}
\email{joao.n.esteves@tecnico.ulisboa.pt}
%\thanks{Partially supported by FCT/Portugal through projects PTDC/MAT-PUR/31089/2017 and UID/MAT/04459/2020.}
\keywords{Hopf Algebras, Topological Recursion, Matrix Models, Cohomology of Algebras}
\begin{abstract}
We propose a quantization algebra of the Loday-Ronco Hopf algebra $k[Y^\infty]$, based on the Topological Recursion formula of Eynard and Orantin. We have shown in previous works that the Loday-Ronco Hopf algebra of planar binary trees is a space of solutions for the genus 0 version of Topological Recursion, and that an extension of the Loday Ronco Hopf algebra as to include some new graphs with loops is the correct setting to find a solution space for arbitrary genus. Here we show that this new algebra $k[Y^\infty]_h$ is still a Hopf algebra that can be seen in some sense to be made precise in the text as a quantization of the Hopf algebra of planar binary trees, and that the solution space of Topological Recursion $\mathcal{A}^h_{\text{TopRec}}$ is a subalgebra of a quotient algebra $\mathcal{A}_{\text{Reg}}^h$ obtained from $k[Y^\infty]_h$ that nevertheless doesn't inherit the Hopf algebra structure. We end the paper with a discussion on the cohomology of $\mathcal{A}^h_{\text{TopRec}}$ in low degree.
\end{abstract}
\maketitle
\tableofcontents

\section{Introduction}
Topological Recursion is a recursion formula found by B. Eynard and N. Orantin \cite{MR2346575} that has its genesis in matrix models and found applications in various fields, from algebraic geometry \cite{DumitrescuMulase2018,Eynard2015} to integrable systems \cite{eynard2019geometry} and combinatorics \cite{MulaseSulkowski2015}.
For a nice general overview and some references see the ``Preface'' in \cite{liu2018topological}.

In \cite{1751-8121-48-44-445205} we proposed a solution to Topological Recursion in genus $g=0$ based on planar binary trees (pbt) and in $g>0$ based on a new type of graphs with loops built from pbt. The elementary tree $\tree$ and the elementary one loop graph $\oneloop$ built from it by identification of its two leaves are the simplest nontrivial examples of these graphs\footnote{A resum\'{e} of the proprieties of Loday-Ronco's Hopf algebra of pbt \cite{MR1654173} is given in the next section.}. We would like to see these as prototypes of a pair of pants and a one-punctured torus, respectively, that are the building blocks of topological or Riemann surfaces of arbitrary genus with borders or marked points and negative Euler characteristic, even though we won't elaborate much about this analogy on this work. 

Here we will mainly address the issue left open in \cite{MR3888785,1751-8121-48-44-445205} that if the set of the new type of graphs with loops build from pbt still has the structure of a Hopf algebra. We will answer affirmatively to this question with some caveats, showing that the new Hopf algebra $k[Y^\infty]_h$ can be seen as a quantization of Loday-Ronco's Hopf algebra $k[Y^\infty]$ in the following sense: we see the Hopf algebra of pbt as the set of Feynman diagrams of correlation functions at tree level (``classical'' approximation) and the full algebra of loop graphs $k[Y^\infty]_h$ as an expansion in Planck's constant $h$ of some physical theory still to specify\footnote{In this respect see the recent work \cite{wang2021formalism}}. We identify the $h$ expansion with the loop number expansion as is usual in Quantum Field Theory \cite{RevModPhys.47.165}, ignoring possible additional contributions to $h$ powers from external legs of graphs. The algebra $k[Y^\infty]_h$ contains graphs that can not be identified with Riemann surfaces and that we call irregular. By projecting these graphs onto 0 we get a quotient algebra $\mathcal{A}_\text{Reg}^h\subset k[Y^\infty]_h$ that doesn't inherit the structure of a Hopf algebra, because the restriction of the co-product fails to be a homomorphism. By considering words in two generators $\tree$ and $\oneloop$ we get a further restriction $\mathcal{A}_\text{TopRec}^h\subset\mathcal{A}_\text{Reg}^h$ that is the natural space of solutions of the Topological Recursion formula.

The operation of producing a loop between two consecutive leaves in a graph has naturally a differential nature. We show that with a total differential $d_h$ build from this operation, $(k[Y^\infty]_h,d_h)$ is a co-chain complex, graded by the loop number $g$ and the order $n$ of the underlying trees of the graphs $t^g\in k[Y^\infty]_h$ that we identify with the inverse of the Euler characteristic of the associated Riemann surface for regular graphs. We end the paper with a discussion of the cohomology in low degree of the sub-algebra $\mathcal{A}_\text{TopRec}^h$.
\section{Hopf algebra of Loday-Ronco}
 We collect here some important facts of the Loday-Ronco Hopf algebra. Details and proofs can be found in \cite{MR2194965,MR1654173}.
Let $S_n$ be the symmetric group of order $n$ with the usual product $\rho\cdot\sigma$ given by the composition of permutations. When necessary we denote a permutation $\rho$ by its image $(\rho(1)\rho(2)\dots\rho(n))$. Recall that a shuffle $\rho(p,q)$ of type $(p,q)$ in $S_n$ is a permutation such that $\rho(1)<\rho(2)<\dots <\rho(p)$ and $\rho(p+1)<\rho(p+2)<\dots <\rho(p+q)$, with $p+q=n$. For instance the shuffles of type $(1,2)$ in $S_3$ are $(123),(213)$ and $(312)$. We denote the set  of $(p,q)$ shuffles by $S(p,q)$.
Take 
\begin{equation}
k[S^\infty]=\oplus_{n=0}^{\infty}k[S_n]
\end{equation}
with $S_0$ identified with the empty permutation. $k[S^\infty]$ is a vector space over a field $k$ of characteristic $0$ generated by linear combinations of permutations. It is graded by the order of permutations and $k[S_0]$ which is generated by the empty permutation is identified with the field $k$. For two permutations $\rho\in S_p$ and $\sigma\in S_q$ there is a natural product on $S^\infty$ denoted by $\rho\times\sigma$ which is a permutation on $S_{p+q}$ given by letting $\rho$ acting on the first $p$ variables and $\sigma$ acting on the last $q$ variables.

There is a unique decomposition of any permutation $\sigma\in S_n$ in two permutations $\sigma_i\in S_i$ and $\sigma'_{n-i}\in S_{n-i}$ for each $i$ such that
\begin{equation}
\sigma =(\sigma_i\times\sigma'_{n-i})\cdot w^{-1}
\end{equation}
where $w$ is a shuffle of type $(i,n-i)$.
With the $\ast$ product
\begin{equation}
\rho\ast \sigma=\sum_{\alpha_{p,q}\in S(p,q)}\alpha_{p,q}\cdot\left(\rho\times\sigma\right)
\end{equation}
and the co-product
\begin{equation}\label{eq:coproductperm}
\Delta\sigma=\sum \sigma_{i}\otimes\sigma^{'}_{n-i}
\end{equation}
$k[S^{\infty}]$ becomes a bi-algebra and since it is graded and connected it is automatically a Hopf Algebra.

A planar binary tree is a connected graph with no loops embedded in the plane, with only trivalent internal vertices (Fig. \ref{fig:pbt3-2}). In every planar binary tree there are paths that start on a special external edge called the root and end on the other external edges called leaves. It is customary in the subject to ignore terminal vertices that are extremities of order 1 of external edges. The leaves can be left or right oriented. The order $|t|$ of a planar binary tree $t$ is the number of its (internal) vertices and on each planar binary tree of order $n$ there are $n+1$ leaves that usually are numbered from 0 to $n$ from left to right. It is frequent to visualize planar binary trees from the bottom to the top, with the root as its lowest vertical edge and the leaves as the highest edges, oriented SW-NE or SE-NW. We will denote the set of planar binary trees of order $n$ by $Y^n$ and by $k[Y^\infty]$ the graded vector space over $k$ generated by planar binary trees of all orders.
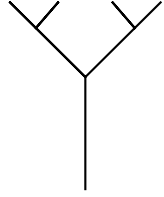
\begin{figure}
	\begin{tikzpicture}
	\draw[thick] (1,-0.5) -- (1,1) -- (0,2) -- (0.35,1.65)--(0.65,2)--(0.35,1.65)-- (1,1) --(1.65,1.65)--(1.35,2)--(1.65,1.65) -- (2,2);
	\end{tikzpicture}
	\caption{A planar binary tree of order 3}\label{fig:pbt3-2}
\end{figure}
Additionally a planar binary tree with levels is a planar binary tree such that on each horizontal line there is at most one vertex.
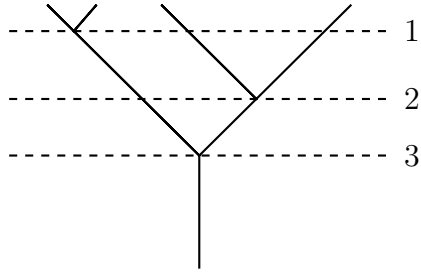
\begin{figure}
	\begin{tikzpicture}
	\draw[thick] (4,-0.5) -- (4,1) -- (2,3) -- (2.35,2.65)--(2.65,3)--(2.35,2.65)-- (4,1) --(4.75,1.75)--(3.5,3)-- (4.75,1.75)--(6,3); \draw[thick,dashed](1.5,1.75)--(6.5,1.75);\draw[thick,dashed](1.5,2.65)--(6.5,2.65);
	\draw (6.8,2.65) node{1}; \draw (6.8,1.75) node{2};\draw[thick,dashed](1.5,1)--(6.5,1);\draw (6.8,1) node{3};
	\end{tikzpicture}
	\caption{Planar binary tree with levels that is the image of $\mathbf{(132)}$}\label{fig:pbtlev3}
\end{figure}
It is clear that reading the vertices from left to right and from top to bottom it is possible to assign a permutation of order $n$ to a planar binary tree with levels and that this assignment is unique. For example in fig. \ref{fig:pbtlev3} the tree corresponds to the permutation $(132)$. In this way it is completely equivalent to consider the Hopf algebra $k[S^\infty]$ or the Hopf algebra of planar binary trees with levels because they are isomorphic. However Loday and Ronco show in \cite{MR1654173} that the $\ast$ product and the co-product are internal on the algebra of planar binary trees which is then isomorphic to a Hopf sub-algebra of $k[S^\infty]$ with the same product and co-product. The identity of the Hopf Algebra $k[Y^\infty]$ is the tree with a single edge and no vertices, following the convention of considering only internal vertices, which represents the empty permutation, and the trivial permutation of $S_1$ is represented by the tree with one vertex and two leaves, see fig \ref{fig:idgen}. In fact this element is the generator of the augmented algebra by the $\ast$ product. See fig. (\ref{fig:1star1star1}) for an example of an order 3 product.

\begin{figure}
	\begin{tikzpicture}
	\draw (0,1) node{ $\Large{1=}$};\draw[thick] (1,-0.25) -- (1,1.75);
	\draw (3.5,1) node{ $\Large{(\mathbf{1})=}$};\draw[thick] (5,-0.5) -- (5,1) -- (4,2) -- (5,1) -- (6,2) ;
	\end{tikzpicture}
	\caption{The identity and the generator in $k[Y^\infty]$}\label{fig:idgen}
\end{figure}
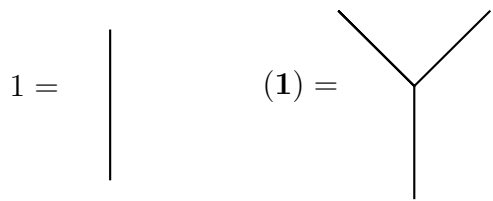
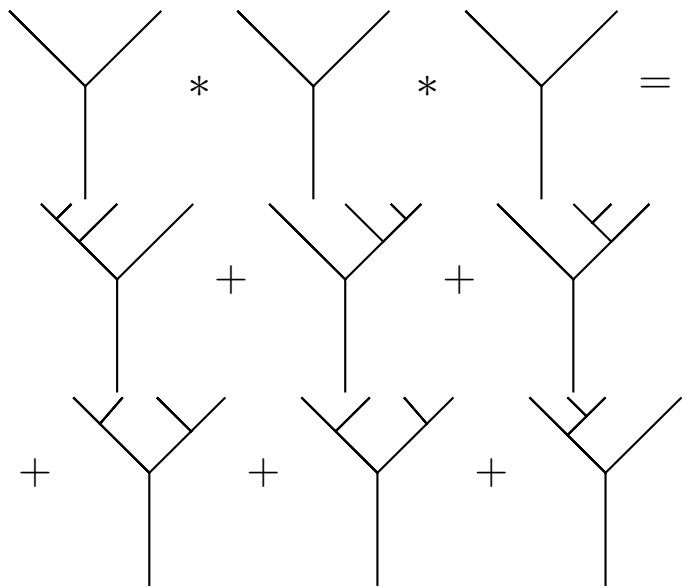
\begin{figure}
	\begin{tikzpicture}
	\draw[thick](1,-0.5) -- (1,1) -- (0,2) -- (1,1) -- (2,2) ; \draw (2.5,1) node{\textbf{{\Large $\ast$}}}; \draw[thick](4,-0.5) -- (4,1) -- (3,2) -- (4,1) -- (5,2); \draw (5.5,1) node{\textbf{{\Large $\ast$}}};\draw[thick](7,-0.5) -- (7,1) -- (6,2) -- (7,1) -- (8,2);\draw (8.5,1) node{\textbf{{\Large $=$}}};
	\end{tikzpicture}
	\begin{tikzpicture}
	\draw[thick] (1,-0.5) -- (1,1) -- (0,2) -- (0.2,1.8)--(0.4,2)--(0.2,1.8)-- (0.5,1.5)--(1,2)--(0.5,1.5)-- (1,1) -- (2,2);\draw (2.5,1) node{\textbf{{\Large $+$}}}; \draw[thick] (4,-0.5) -- (4,1) -- (3,2) -- (4,1) -- (5,2)--(4.8,1.8)--(4.6,2)--(4.8,1.8)--(4.5,1.5)--(4,2);\draw (5.5,1) node{\textbf{{\Large $+$}}}; \draw[thick] (7,-0.5) -- (7,1) -- (6,2) -- (7,1) -- (8,2)--(7.5,1.5)--(7.25,1.75)--(7.5,2)--(7.25,1.75)--(7,2);
	\end{tikzpicture}
	\begin{tikzpicture}
	\draw (-0.5,1) node{\textbf{{\Large $+$}}};\draw[thick] (1,-0.5) -- (1,1) -- (0,2) -- (0.35,1.65)--(0.65,2)--(0.35,1.65)-- (1,1) --(1.55,1.55)--(1.10,2)--(1.55,1.55) -- (2,2);\draw (2.5,1) node{\textbf{{\Large $+$}}};  \draw[thick] (4,-0.5) -- (4,1) -- (3,2) -- (3.45,1.55)--(3.9,2)--(3.45,1.55)-- (4,1) --(4.65,1.65)--(4.35,2)--(4.65,1.65) -- (5,2);\draw (5.5,1) node{\textbf{{\Large $+$}}}; \draw[thick] (7,-0.5) -- (7,1) -- (6,2) -- (6.5,1.5)--(6.75,1.75)--(6.5,2)--(6.75,1.75)--(7,2)-- (6.5,1.5) -- (7,1)-- (8,2);
	\end{tikzpicture} 
	\caption{$\mathbf{(1)}\ast\mathbf{(1)}\ast\mathbf{(1)}=\mathbf{(123)}+\mathbf{(321)}+\mathbf{(312)}+\mathbf{(132)}+\mathbf{(231)}+\mathbf{(213)}$ computed in $k[S^\infty]$. Note that in $k[Y^\infty]$ the fourth and the fifth trees are the same.} \label{fig:1star1star1}
\end{figure}
The grafting $t_1 \vee t_2$ of two trees $t_1$ and $t_2$ is the operation of producing a new tree $t$ by inserting $t_1$ on the left and $t_2$ on the right leaves of $\mathbf{(1)}$. It is clear that any tree of order $n$ can be written as $t_1\vee t_2$ with $t_1$ of order $p$, $t_2$ of order $q$ and $n=p+q+1$. If a tree has only leaves on the right branch besides the first leaf then it can be uniquely written as $|\vee t_2$ and reciprocally if it has only leaves on the left branch besides the last leaf. Note that in particular $\mathbf{(1)}=|\vee |$. 
In \cite{MR1654173} Loday and Ronco show that the $\ast$ product restricted to planar binary trees satisfies the identity
\begin{equation}\label{eq:shuffleident1}
t\ast t'=t_1\vee (t_2\ast t')+(t \ast t_1^{'})\vee t_2^{'}
\end{equation}
and
\begin{equation}\label{eq:shuffleident2}
t\ast |=|\ast t=t
\end{equation}
with $t=t_1\vee t_2$ and $t'=t_1^{'}\vee t_2^{'}$.
\section{The relation with Eynard-Orantin topological recursion formula}\label{sec:toprec}
The topological recursion formula of Eynard and Orantin has its origin in Matrix Models, for general reviews see for instance \cite{DiFrancesco:1993nw,MR2346575}. In the hermitian 1-matrix form of the theory the purpose is to compute connected correlation functions $W_{k+1}$ depending on a set of variables $p, p_1,\dots,p_k$
\begin{equation}
W_{k+1}(p,p_1,\dots p_k)=\Braket{\text{Tr}\frac{1}{p-M}\text{Tr}\frac{1}{p_1-M}\dots \text{Tr}\frac{1}{p_k-M}}_c
\end{equation}
starting with $W_1(p)$ and $W_2(p,p_1)$.
These functions which are solutions of the so-called loop equations are only well defined over Riemann surfaces because in $\Complex$ they are multi-valued. They admit an expansion on the order $N$ of the random matrix $M$, with components $W_{k+1}^g(p,p_1,\dots p_k)$ related to a definite genus. We will not be concerned here with the actual computation of correlation functions in specific models, except for the example below.

Let $K=(p_1,\dots,p_k)$ be a vector of variables. For instance in concrete cases these can be coordinates of punctures on Riemann surfaces, labels of borders on topological surfaces or variables in Matrix Models, but we just leave them as labels of leaves of planar binary trees or of graphs obtained from planar binary trees. We assign the label $p$ to the root of a tree or of a graph with loops obtained from a tree. The topological recursion formula is
\begin{align}\label{eq:toprec}
&W_{k+1}^g(p,K)=\sum_{\text{branch points }\alpha}\text{Res}_{q\rightarrow \alpha}K_p(q,\bar{q})\notag\\
&\left(W^{g-1}_{k+2}(q,\bar{q},K)+\sum_{L\cup M=K,i=0}^g W^i_{|L|+1}(q,L)W^{g-i}_{|M|+1}(\bar{q},M)\right)
\end{align}
where the sum is restricted to terms with Euler characteristic equal or smaller than 0. For instance if $i=0$ then $|L|\ge 1$. 
For a very clear exposition about this setup from the point of view of Algebraic Geometry see for instance \cite{MR3087960} but some comments are in order. The branch points are the ones from a meromorphic function $x$ defined on a so called spectral curve $\mathcal{E}(x,y)=0$. The recursion kernel $K_p(q,\bar{q})$ is, roughly speaking, a meromorphic (1,1) tensor that depends on a regular point $p$ and on $q$ in the neighborhood of a branch point and its Galois conjugated point $\bar{q}$ for which $x(q)=x(\bar{q})$ and $y(q)=-y(\bar{q})$\footnote{Due to a general phenomena known as Symplectic Invariance (see \cite{MR2346575}) the theory is invariant under the transformation $x\rightarrow y, y\rightarrow x$ so the previous conditions can also be expressed under this permutation. }. In fact the recursion kernel can be computed from the spectral curve and $W_2^0(p,p_1)$ which is a symmetric meromorphic differential of order two. (see the example below). Actually, all $W_k^g$ are meromorphic symmetric differentials but we will continue to refer to them as correlation functions. Since our approach will be purely algebraic and in order to soften the notation we will not explicitly mention the sum of the residues over the branch points when referring to this formula. The only exception is the following
\begin{defn2}[The Airy spectral curve]\label{ex:Airy}
The Airy function 
\begin{equation}
	Ai(x)=\frac{1}{2\pi}\int_{-\infty}^{\infty}e^{ipx}e^{i\frac{p^3}{3}}dp,
\end{equation}	
for $x\in\Real$, is the solution of the ordinary differential equation
\begin{equation}
	\left(\frac{d^2 }{dx^2}-x\right)Ai(x)=0.
\end{equation}
It can be seen as the Schr\"{o}dinger equation that results from the quantization of the algebraic curve
\begin{equation}\label{eq:Airy}
	y^2-x=0
\end{equation}
 under the quantization rule that replaces the variable $x$ by the operator that multiplies a function in a convenient Hilbert space by the independent variable $x$ and the variable $y$ by the ``momentum'' operator $-id/d x$:
 \begin{equation}
 	\left(\left(-i\frac{d}{d x}\right)^2+x\right)Ai(x)=0.
 \end{equation}

Following \cite{MR2346575}, we will consider (\ref{eq:Airy}) with $x,y\in\Complex$ as the spectral curve for the Topological Recursion formula (\ref{eq:toprec}) and show how compute the correlation functions from it. 

We choose the following parametrization of the Airy curve:
\begin{equation}
	x(z)=z^2, y(z)=z, z\in\Complex.
\end{equation}
%Due to a general phenomena known as Sympletic Inavriance (see \cite{MR2346575}) the %theory is invariant under the transformation $x\rightarrow y, y\rightarrow x$ 
The branching points $\alpha$ are the solutions of $dx(z)=0$ which in this case is just $\alpha: z=0$. Then the Galois conjugate points are $y(q)=q$ and $y(\overline{q})=\overline{q}=-q$ for $q$ in a neighborhood of 0.
Since (\ref{eq:Airy}) is a curve of genus $0$ the so called Bergman Kernel $B(p,q)$ which is a symmetrical meromorphic 2-tensor with no residues and a double pole at $p=q$ is particularly simple:
\begin{equation}
	B(p,q)= \frac{1}{(p-q)^2}dp\otimes dq.
\end{equation}
Then the recursion kernel $K_p(q,\overline{q})$ is given by
\begin{equation}
	K_p(q,\overline{q})=\frac{1}{\omega(q)}\int_q^{\overline{q}}B(\cdot,p)
\end{equation}
where $\omega(q)=(y(q)-y(\overline{q}))dx(q)$.
Explicitly,
\begin{equation}
	K_p(q,\overline{q})=\frac{1}{4q(q^2-p^2)}dp\otimes\frac{1}{dq}
\end{equation}
where $\overline{q}=-q$ has already been assumed. The $dq$ factor in the denominator is a remind that $K_p(q,\overline{q})$ is a mixed tensor that annihilates a $dq$ one-form. Also $W^0_2(p,p_1)$ in this case is just the Bergman kernel: 
\begin{equation}
	W^0_2(p,p_1)=\frac{1}{(p-p_1)^2}dp\otimes dp_1
\end{equation}
From (\ref{eq:toprec}) we can get the expression of $W^0_3(p,p_1,p_2)$:
\begin{align}\label{eq:toprecW03}
	W^0_3(p,p_1,p_2)&=\text{Res}_{q\rightarrow 0}K_p(q,\bar{q})\\\notag
	&\left(W^0_2(p_1,q)W^0_2(p_2,\overline{q})+W^0_2(p_2,q)W^0_2(p_1,\overline{q})\right).
\end{align}
The computation of the residue offers no difficulty if we remember that the integration measure $dq$ is contained in the argument of $\text{Res}_{q\rightarrow 0}$ in (\ref{eq:toprecW03}) and the result is
\begin{equation}
	W^0_3(p,p_1,p_2)=\frac{1}{2p^2p_1^2p_2^2}dp\otimes dp_1\otimes dp_2.
\end{equation}

Similarly for $W^1_1(p)$ we have
\begin{align}
	W^1_1(p)=&\text{Res}_{q\rightarrow 0}K_p(q,\bar{q})W^0_2(q,\overline{q})\\\notag
	&=\left(\text{Res}_{q\rightarrow 0}\frac{1}{4q(q^2-p^2)}\frac{d\overline{q}}{(q-\overline{q})^2}\right)\otimes dp\\\notag
	&=-\frac{1}{16}\left(\text{Res}_{q\rightarrow 0}\frac{1}{q^3(q-p)(q+p)}dq\right)\otimes dp\\\notag
	&=\frac{1}{16}\left(\text{Res}_{q\rightarrow \pm p}\frac{1}{q^3(q-p)(q+p)}dq\right)\otimes dp\\\notag
	&=\frac{dp}{(2p)^4}.
\end{align}
\end{defn2}

\subsection{Genus 0.}\label{sec:toprecg0}
 The existence of a sort of representation map $\psi$ from the vector space of correlation functions of genus $g$ to the vector space of graphs with loops was suggested in \cite{1751-8121-48-44-445205} such that in particular to a correlation function $W^0_{n+2}(p,p_1,\dots,$$p_{n+1})$ of Euler characteristic $\chi=-n$ and genus 0 would correspond the trees of order $n$. In fact the way that this map was defined implied that the representation of $W^0_{n+2}(p,p_1,\dots,p_{n+1})$ is the sum of all trees of order $n$. In \cite{MR3888785} we saw that the map $\psi$ allows to give a ring structure to the space of correlation functions that obey the Eynard-Orantin formula, with an identity that is given by the cylinder, $W^0_2(p,p_1)$. In fact it is a consequence of the axioms of topological quantum field theory as stated by Atiyah for example in \cite{atiyah1988topological} that the cylinder $\Sigma\times I$, where $\Sigma$ is a topological surface without border and $I$ is a interval of real numbers, may be identified with the identity map in a vector space. 
 
 \begin{defn}\label{def:W3b}\cite{1751-8121-48-44-445205}
 	The propagator or cylinder (also named Bergman kernel in the literature) $W^0_2(q,\bar{q})$ is represented through $\psi$ by the empty permutation $|$ and the recursion kernel is represented through $\psi$ by $\tree$ when in an internal vertex of some tree. Then each planar binary tree of order n is a representation of an instance of some correlation function in genus 0 with each vertex identified with a recursion kernel and each left leaf identified with the cylinder $W^0_2(q_i,p_j)$ or each right leaf identified with the cylinder $W^0_2(\bar{q}_i,p_k)$. Finally the image under $\psi$ of a correlation function $W_{n+2}^0(p,p_1,\dots,p_{n+1})$ with $\chi=-n$ is the sum of all planar binary trees of order $n$ considering all permutations of their leaf labels and with the identifications mentioned above,
 	\begin{equation}
 	\psi\left(W_{n+2}^0(p,p_1,\dots,p_{n+1})\right)=\sum_{\substack{t_i\in Y^n \\ \text{perm. of leaf labels $\{p_1,\dots,p_{n+1}\}$}}} t_i.
 	\end{equation}
 \end{defn}

\begin{defn2}\label{ex:W3}
	Consider the planar binary tree with one vertex. The 3-point correlation function $W_3^0(p,p_1,p_2)$ is represented by the sum of two planar binary trees with one vertex, obtained by the permutation of the leaf labels $p_1$ and $p_2$.
\end{defn2}
\begin{align}
\psi\left(W_3^0(p,p_1,p_2)\right)&=\psi\left(K_p(q,\bar{q})W^0_2(q,p_1)W^0_2(\bar{q},p_2)\right)+ \text{ perm. of $\{p_1,p_2\}$}\notag\\
&=\sum_{\text{ perm. of $\{p_1,p_2\}$}}|\vee |\notag\\
&=\sum_{\text{ perm. of $\{p_1,p_2\}$}}(\mathbf{1})
\end{align}

\begin{prop}\cite{1751-8121-48-44-445205}
	If $W_{n+2}^0(p,p_1,\dots,p_{n+1})$ is a correlation function with Euler characteristic $\chi=-n$ that is a solution of (\ref{eq:toprec}) then we have
	\begin{align}\label{eq:defcorr}
	\psi \left(W_{n+2}^0(p,p_1,\dots,p_{n+1})\right)&=\sum_{\substack{p+q+1=n\\|t_1|=p, |t_2|=q}} t_1\vee t_2\notag\\
	& + \text{perm. of leaf labels $\{p_1,\dots,p_{n+1}\}$}
	\end{align}
\end{prop}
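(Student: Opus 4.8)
The plan is to proceed by induction on $n$, converting the genus-$0$ specialization of the topological recursion (\ref{eq:toprec}) into the grafting recursion for planar binary trees by means of the translation rules of Definition \ref{def:W3b}; throughout, ``$\mathrm{perm.}$'' abbreviates summation over all bijective labellings of the available leaves by the relevant alphabet, exactly as in Definition \ref{def:W3b}. The base case $n=1$ is Example \ref{ex:W3}: in genus $0$ the term $W^{g-1}$ of (\ref{eq:toprec}) is absent, and the requirement that both surviving factors have Euler characteristic $\le 0$ forces $|L|=|M|=1$, so $W_3^0(p,p_1,p_2)=\mathrm{Res}_{q\to\alpha}K_p(q,\bar q)\bigl(W_2^0(q,p_1)W_2^0(\bar q,p_2)+W_2^0(q,p_2)W_2^0(\bar q,p_1)\bigr)$; applying $\psi$, the kernel at the single internal vertex becomes $\tree$ and each cylinder becomes a leaf, which gives $\psi(W_3^0)=\sum_{\mathrm{perm.}}(|\vee|)$, i.e.\ (\ref{eq:defcorr}) for $n=1$ since $|\vee|=(\mathbf{1})$ is the only tree of order $1$.

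For the inductive step, fix $n\ge 2$, assume the statement for all solutions of (\ref{eq:toprec}) of smaller order, and recall $\psi(W_2^0)=|$ from the first sentence of Definition \ref{def:W3b}. With $K=(p_1,\dots,p_{n+1})$, formula (\ref{eq:toprec}) at $g=0$ reads
\begin{equation*}
W_{n+2}^0(p,K)=\mathrm{Res}_{q\to\alpha}K_p(q,\bar q)\sum_{\substack{L\sqcup M=K\\ |L|,|M|\ge 1}} W_{|L|+1}^0(q,L)\,W_{|M|+1}^0(\bar q,M),
\end{equation*}
the condition $|L|,|M|\ge 1$ being precisely the Euler-characteristic restriction. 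By Definition \ref{def:W3b}, applying $\psi$ to a summand $K_p(q,\bar q)\,A(q)\,B(\bar q)$ produces one internal vertex carrying $\psi(A)$ on its left leaf (the unbarred $q$-slot) and $\psi(B)$ on its right leaf (the barred $\bar q$-slot), i.e.\ the grafting $\psi(A)\vee\psi(B)$. Each factor has order $|L|-1$ (resp.\ $|M|-1$), both lying in $\{0,\dots,n-1\}$, so the inductive hypothesis — or $\psi(W_2^0)=|$ when that order is $0$ — replaces it by the sum of all planar binary trees of that order, over all labellings of its leaves by the elements of $L$ (resp.\ $M$). Writing $p=|L|-1$ and $q=|M|-1$, so that $p+q+1=|L|+|M|-1=n$, and summing over all ordered partitions $K=L\sqcup M$, we obtain (\ref{eq:defcorr}).

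The only point requiring genuine verification is the leaf-label bookkeeping at the end: one must check that summing, over all ordered partitions $K=L\sqcup M$ with $|L|=p+1$ and $|M|=q+1$, the graftings of a $p$-tree labelled by $L$ with a $q$-tree labelled by $M$ produces each leaf-labelled tree of order $n$ exactly once. This holds because a grafting $t_1\vee t_2$ canonically splits its $n+1$ leaves into the $p+1$ inherited from $t_1$ and the $q+1$ inherited from $t_2$, so a labelling of $t_1\vee t_2$ is exactly the datum of an ordered partition $(L,M)$ together with labellings of $t_1$ by $L$ and of $t_2$ by $M$; and, as recalled just before (\ref{eq:shuffleident1}), every tree of order $n$ arises uniquely in the form $t_1\vee t_2$ with $|t_1|+|t_2|+1=n$. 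I expect this combinatorial matching, rather than the recursion step itself, to be the only place where care is needed, since the analytic content (residues, kernels, cylinders) has been entirely absorbed into the translation dictionary of Definition \ref{def:W3b}.
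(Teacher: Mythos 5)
The paper itself offers no proof of this proposition: it is imported by citation from \cite{1751-8121-48-44-445205}, and within the present paper's framework it is essentially immediate, since Definition \ref{def:W3b} already \emph{defines} $\psi\left(W_{n+2}^0\right)$ to be the sum of all order-$n$ trees with all leaf labellings, and the right-hand side of (\ref{eq:defcorr}) is just that sum rewritten via the unique grafting decomposition $t=t_1\vee t_2$ with $|t_1|+|t_2|+1=n$. Your argument is correct but takes a more substantive route: you treat $\psi$ only as a dictionary on kernels and cylinders (root vertex $\mapsto\tree$, $W_2^0\mapsto |$, left/right slots $\mapsto$ left/right branches) and then derive the full statement by induction on $n$ from the genus-$0$ specialization of (\ref{eq:toprec}), with the Euler-characteristic constraint correctly forcing $|L|,|M|\ge 1$ and the final bookkeeping resting on the bijection between labellings of $t_1\vee t_2$ and triples (ordered partition $(L,M)$, labelling of $t_1$ by $L$, labelling of $t_2$ by $M$) together with uniqueness of the grafting decomposition. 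This buys something the bare citation does not: it shows the representation is \emph{forced} by the recursion rather than merely consistent with it, i.e.\ it verifies that the recursive structure of (\ref{eq:toprec}) at $g=0$ is exactly the grafting recursion, which is the actual content behind Theorem \ref{thm:corrg0}. The only caveat is one of framing: if one reads the last clause of Definition \ref{def:W3b} as the definition of $\psi$ on correlation functions, your induction is strictly speaking a consistency check of that definition with the recursion; if instead $\psi$ is only defined on kernels and cylinders, as you implicitly assume, your proof is the right and complete argument, with no gap.
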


\begin{thm}\label{thm:corrg0}\cite{1751-8121-48-44-445205}
	The $n$-order solution $W_{n+2}^0(p_1,\dots,p_{n+1})$ of the topological recursion in genus 0 is represented by the linear combination
	$$\sum_{t\in Y^n} t=\mathbf{(1)}\ast\mathbf{(1)}\ast\dots\ast\mathbf{(1)}$$
	with $n$ factors of $\mathbf{(1)}$ followed by the sum over all permutations of its labels. 
\end{thm}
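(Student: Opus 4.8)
The plan is to reduce the statement to a purely algebraic identity in $k[Y^\infty]$ and then establish that identity by induction on the order, using only the two Loday--Ronco shuffle relations \eqref{eq:shuffleident1} and \eqref{eq:shuffleident2}. First I would invoke the Proposition preceding Theorem \ref{thm:corrg0}, equation \eqref{eq:defcorr}, which already expresses $\psi\!\left(W_{n+2}^0(p,p_1,\dots,p_{n+1})\right)$ as $\sum_{p+q+1=n,\,|t_1|=p,\,|t_2|=q}t_1\vee t_2$ together with the sum over all permutations of the leaf labels. Since every planar binary tree of order $n$ has a \emph{unique} grafting decomposition $t=t_1\vee t_2$ with $|t_1|+|t_2|+1=n$, that double sum is exactly $\sum_{t\in Y^n}t$. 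Hence the theorem will follow once one proves the identity
\begin{equation}\label{eq:toprove}
\sum_{t\in Y^n}t=\underbrace{\mathbf{(1)}\ast\mathbf{(1)}\ast\cdots\ast\mathbf{(1)}}_{n\text{ factors}}\qquad\text{in }k[Y^\infty],
\end{equation}
after which one simply adjoins the sum over permutations of $\{p_1,\dots,p_{n+1}\}$ to both sides, $\psi$ acting on leaf labels by mere relabelling.

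To prove \eqref{eq:toprove}, I would set $T_n:=\sum_{t\in Y^n}t$ and induct on $n$. The base cases $n=0$ (with $T_0=|$, the unit) and $n=1$ (with $T_1=\mathbf{(1)}=|\vee|$) are immediate. For the inductive step I would assume $T_k=\mathbf{(1)}^{\ast k}$ for all $k<n$, so that $\mathbf{(1)}^{\ast n}=T_{n-1}\ast\mathbf{(1)}$, and expand the right-hand side term by term: for a tree $t=t_1\vee t_2\in Y^{n-1}$, applying \eqref{eq:shuffleident1} with $t'=\mathbf{(1)}=|\vee|$ and using \eqref{eq:shuffleident2} yields $t\ast\mathbf{(1)}=t_1\vee(t_2\ast\mathbf{(1)})+t\vee|$. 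Summing over $Y^{n-1}$ and regrouping the first summand according to the left factor $t_1$, the inner sum over $t_2$ collapses to $T_{n-2-|t_1|}\ast\mathbf{(1)}$, which by two applications of the induction hypothesis (first to replace $T_{n-2-|t_1|}$ by $\mathbf{(1)}^{\ast(n-2-|t_1|)}$, then to recognize $\mathbf{(1)}^{\ast(n-1-|t_1|)}=T_{n-1-|t_1|}$) equals $T_{n-1-|t_1|}$. Gathering all contributions — the $t\vee|$ terms supplying exactly the $t_2=|$ part — and using once more the uniqueness of the grafting decomposition, I expect to land on $\sum_{p+q+1=n}\sum_{t_1\in Y^p,\,t_2\in Y^q}t_1\vee t_2=T_n$, which closes the induction.

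The main obstacle here is not conceptual but organizational: one has to arrange the induction so that every smaller instance $T_m$ with $0\le m\le n-1$ occurring in the expansion is genuinely covered by the hypothesis, and one must track the unit $|$ carefully through the boundary cases $p=0$ and $q=0$, where it is \eqref{eq:shuffleident2}, not \eqref{eq:shuffleident1}, that makes the recursion telescope; everything else is routine rearrangement of finite sums. I note that one could equally run the recursion from the other side, expanding $\mathbf{(1)}\ast T_{n-1}$ and grouping by the right factor, which gives a mirror-image argument, and that \eqref{eq:toprove} is in substance the classical fact, implicit in \cite{MR1654173}, that $(k[Y^\infty],\ast)$ is generated by $\mathbf{(1)}$ with its $n$-th power spread uniformly over $Y^n$.
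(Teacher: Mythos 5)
Your argument is correct, but note that the paper itself offers no proof of this statement: Theorem \ref{thm:corrg0} is quoted from \cite{1751-8121-48-44-445205}, and the surrounding text only interprets it. What you supply is the natural missing argument, and it is sound within the paper's framework: the reduction via the Proposition (\ref{eq:defcorr}) together with uniqueness of the grafting decomposition $t=t_1\vee t_2$ correctly identifies $\psi\left(W^0_{n+2}\right)$ with $\sum_{t\in Y^n}t$ (up to the leaf-label permutations, which $\psi$ treats as relabelling), and your induction $T_{n-1}\ast\mathbf{(1)}=T_n$ using only (\ref{eq:shuffleident1}) and (\ref{eq:shuffleident2}) closes correctly: expanding $t\ast\mathbf{(1)}=t_1\vee(t_2\ast\mathbf{(1)})+t\vee|$, grouping by the left factor, and applying the strong induction hypothesis twice does produce each $t_1\vee t_2\in Y^n$ exactly once, the $t\vee|$ terms accounting for the $|t_2|=0$ stratum. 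One point your proof actually sharpens relative to the paper's presentation: computed with the recursion (\ref{eq:shuffleident1})--(\ref{eq:shuffleident2}), the product $\mathbf{(1)}^{\ast n}$ equals $\sum_{t\in Y^n}t$ with every tree appearing with coefficient exactly $1$ (e.g.\ for $n=3$ one gets the five trees of $Y^3$ each once), whereas the caption of Figure \ref{fig:1star1star1}, which computes in $k[S^\infty]$ and then identifies permutations with trees, could be misread as giving the balanced tree multiplicity $2$; the resolution is that under the Loday--Ronco embedding a tree corresponds to the sum of the permutations in its fibre, so the sum of all six permutations of $S_3$ is precisely the image of the multiplicity-free sum over $Y^3$, consistent with your computation. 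Your closing remark that one could symmetrically expand $\mathbf{(1)}\ast T_{n-1}$ is also fine, by associativity.
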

In this way $W_{n+2}^0(p,p_1,\dots,p_{n+1})$ is represented by $\sum_{t\in Y^n} t$ followed by the identification of cylinders $W^0_2(q_i,p_j)$ with the left leaves or $W^0_2(\bar{q}_i,p_k)$ with the right leaves and finally by summing over all permutations of the labels $p_1,p_2,\dots, p_{n+1}$. In other words, the $\ast$ product $\mathbf{(1)}\ast\mathbf{(1)}\ast\dots\ast\mathbf{(1)}$ gives all possible insertions of recursion kernels of $W_{n+2}^0$.

\subsection{Genus higher than 0}\label{sec:toprecgbt0}
The procedure of attaching an edge to two consecutive leaves and producing a graph with loops allows to represent correlation functions with genus $g>0$. This is equivalent to extracting the outermost cylinders $W^0_2(x,p_j),W_2^0(y,p_{j+1}), x=q_i$ or $\bar{q}_i$, $y=\bar{q}_j$ or $q_j$ and to couple a cylinder $W^0_2(x,y)$  to two recursion kernels $K_{q_l}(q_i,\bar{q}_i)$ and $K_{q_m}(q_j,\bar{q}_j)$, for some convenient choice of indices, that are identified with two internal vertices. This procedure does not change the Euler characteristic of the associated correlation functions because the number of pairs of leaf labels is reduced exactly as the genus is increased. For instance with this procedure we can make the sequence
\begin{equation}
W_5^0(p,p_1,\dots p_4)\longrightarrow W^1_3(p,p_1,p_2)\longrightarrow W^2_1(p)
\end{equation} 
and remain in the same graded vector space that contains $k[Y^3]$. How this changes the Hopf algebra structure will be discussed below. For now, we define the operation $_{i}\leftrightarrow_{i+1}$ on a planar binary tree. 
\begin{defn}\label{defn:connecting}\cite{1751-8121-48-44-445205}
	Starting with a planar binary tree of order $n$ and $n+2$ labels (including the root label $p$) the operation $_{i}\leftrightarrow_{i+1}$ consists in erasing the labels of the leaves $i$ and $i+1$ then connecting them by an edge and finally relabeling the remaining leaves, now numbered $j$ with $j=0,\dots,n-2$, with the $p_{j+1}$ labels, producing in this way a graph with one loop.
\end{defn}
Therefore we represent a correlation function $W^g_{k}(p,p_1,\dots,p_{k-1})$ of genus $g$ by graphs with loops $t^g$ that are obtained by successive applications of the $_{i}\leftrightarrow_{i+1}$ operation. We denote by $\left(Y^n\right)^g$ the set of different graphs with $g$ loops that are obtained from trees $t\in Y^n$. We set the total order $\|t^g\|$ of a graphs $t^g\in (Y^n)^g$ to $\|t^g\|=n+g.$
\begin{defn}\cite{1751-8121-48-44-445205}
	A correlation function $W^g_{k}(p,p_1,\dots,p_{k-1})$ of genus $g$ and Euler characteristic $\chi=2-2g-k$ is represented by a sum of all different graphs with loops $t^g\in \left(Y^n\right)^g$ for  $n=-\chi$:
	\begin{equation}
	\psi\bigl(W^g_{k}(p,p_1,\dots,p_{k-1})\bigr)=\sum_{t^g\in (Y^n)^g} t^g
	\end{equation}
\end{defn}
\section{A quantization of Loday-Ronco Hopf algebra }
We start with the module over a ring $k$ of characteristic 0 generated by correlation functions of genus 0 that are solutions of Topological Recursion and which we denote by $\textbf{Corr}$.  We consider it as the direct sum of modules of correlation functions of specific Euler characteristic $\chi=-n$, which we denote by $\textbf{Corr}^{\mathbf{(n)}}$:
\begin{equation}
\textbf{Corr}=\overset{\infty}{\underset{n=0}{\oplus}}\textbf{Corr}^{\mathbf{(n)}}
\end{equation}
so that $\textbf{Corr}$ is a graded module. We show in \cite{MR3888785} that $\textbf{Corr}$ carries an algebra structure with the product between correlation functions of genus 0 given simply by $W^0_{k}\ast W^0_{l}=W^0_{k+l}$. This is an immediate consequence of the fact that a $g=0$ correlation function $W^0_{k}$ associated with an Euler characteristic $\chi=2-k$ is represented by the sum of all pbt of order $-\chi$ that in turn is given by the $\ast$ product of $-\chi$ factors of $\tree$, see Theorem \ref{thm:corrg0}. 

We see $\textbf{Corr}$ as a classical space and consider its quantization $\textbf{Corr}_{\mathbf{h}}$ to be the space of correlation functions with $g>0$. In the same way we have
\begin{equation}
\textbf{Corr}_{\mathbf{h}}=\overset{\infty}{\underset{n=0}{\oplus}}\textbf{Corr}^{{\mathbf{(n)}}}_{\mathbf{h}}
\end{equation}
$\textbf{Corr}_{\mathbf{h}}^{\mathbf{(n)}}$ is by definition the free module generated by correlation functions $W^g_k(p,p_1,\dots,p_{k-1})$ with a fixed Euler characteristic $\chi=-n$ and varying genus $g>0$, up to the consistency of the formula $\chi=2-2g-k$, over the ring $k[[h]]$ of formal power series in a parameter $h$ with coefficients in $k$. In this way every correlation function $W^{(n)}\in\textbf{Corr}_{\mathbf{h}}^{\mathbf{(n)}}$ admits an expansion in terms of $h$. We are assuming here the well established idea in Quantum Field Theory that an expansion in the loop number is the same thing as an expansion in $h$ \cite{RevModPhys.47.165}. Also we are ignoring possible contributions of the external edges to powers in $h$. For $\chi=-n$ even,
\begin{align}\label{eq:fullcorrfun}
W^{(n)}=&W^0_{n+2}(p,p_1,\dots,p_{n+1})+hW^1_{n}(p,p_1,\dots,p_{n-1})+\notag\\
&\dots +h^{n/2}W^{n/2}_2(p,p_1).
\end{align}
and for $\chi=-n$ odd,
\begin{align}
W^{(n)}=&W^0_{n+2}(p,p_1,\dots,p_{n+1})+hW^1_{n}(p,p_1,\dots,p_{n-1})+\notag\\
&\dots +h^{(n+1)/2}W^{(n+1)/2}_1(p).
\end{align}
Similarly for each tree $t\in Y^n$ of order $n$ there will be a full quantum graph $t_h$ expanded in powers of $h$ 
\begin{equation}
t_h=t+ht^1+h^2t^2+\dots+h^gt^g
\end{equation}
with $t^1\in \left(Y^n\right)^1$ a graph of order $n$ with one loop, etc... And with terms of order up to $g$ compatible with the relation $-n=2-2g-k$, $k=1$ or 2 and without any or only one free leaf respectively. The tree $t$ is thus the 0 order term of $t_h$. We consider two gradings: the Euler characteristic degree which coincides with minus the order $n$ of the underlying pbt of the graph and the genus or loop degree $g$ which coincides with the power of $h$ in the expansion of the full quantum graph. For the elementary tree,
\begin{equation}
\tree_h =\tree+ h\oneloop
\end{equation}
Then is clear that we have a two term direct sum in the full module of graphs with loops
\begin{equation}
k[Y^\infty]_h=k\left[\oplus_{n=0}^\infty\oplus_{g,2g+k-2=n}\left(Y^n\right)^g\right].
\end{equation}

The product $\ast_h$ in $\textbf{Corr}_{\mathbf{h}}$ that we will discuss below endows it with a ring structure and is a direct consequence of topological recursion formula. At the level of graphs that represent correlation functions of arbitrary genus it is a generalization
%The impossibility to obtain a two loop graph from a binary tree of order 2 due to the lack of free leaves to contract imposes the nilpotency of $\oneloop$:
%\begin{equation}
%\oneloop\ast_h\oneloop=0
%\end{equation}
%From this it follows that the element $1+\oneloop$ is invertible:
%\begin{equation}
%\left(1+\oneloop\right)^{-1}=1-\oneloop.
%\end{equation}
%Also the inverse of $1-\tree$ is easily computed as a formal series expansion:
%\begin{equation}
%\left(1-\tree\right)^{-1}=1+\tree+\tree\ast\tree+\tree\ast\tree\ast\tree+...
%\end{equation}
%which equals the infinite sum of all planar binary trees of all orders.
of the relation (\ref{eq:shuffleident1}) satisfied by pbt (see \cite{MR1654173}):
\begin{equation}
t_1\ast t_{2}= 
\left(t_{1}\ast t_{2}^{'}\right)\vee t^{''}_{2}+t^{'}_{1}\vee \left(t^{''}_{1}\ast t_{2}\right)
\end{equation}
for $t_1=t_1^{'}\vee t_1^{''}$ and $t_2=t_2^{'}\vee t_2^{''}$ two planar binary trees of arbitrary order decomposed on their left and right branches. We use this relation as the base of the generalization to graphs with loops \footnote{Remember the operator $\bridge$, defined in \cite{1751-8121-48-44-445205} that, for $t^{g_1}\in(Y^{n_1})^{g_1},t^{g_2}\in(Y^{n_2})^{g_2}$, produces a new graph $t^{g}=t^{g_1}\bridge t^{g_2}\in(Y^{n})^g$, with $n=n_1+n_2+1, g=g_1+g_2+1$, by making $t^{g_1}$ and $t^{g_2}$ the left and right branches of $t^{g}$, respectively, and producing a new loop by identifying with an additional edge the last leaf of $t^{g_1}$ with the first leaf of $t^{g_2}$, whenever this is possible.}
\begin{defn}
	\begin{align}\label{eq:quantumproduct}
	t^{g_1}\ast_h t^{g_2}= 
	\begin{cases}
	&\left(t^{g_1}\ast_h t^{g^{'}_2}\right)\vee t^{g^{''}_{2}}+t^{g^{'}_{1}}\vee \left(t^{g^{''}_{1}}\ast_h t^{g_{2}}\right),\\
	&\text{if } t^{g_{1}}=t^{g^{'}_{1}}\vee t^{g^{''}_{1}}\text{ and } t^{g_{2}}=t^{g^{'}_{2}}\vee t^{g^{''}_{2}};\\
	&\left(t^{g_1}\ast_h t^{g^{'}_2}\right)\vee t^{g^{''}_{2}}+t^{g^{'}_{1}}\bridge \left(t^{g^{''}_{1}}\ast_h t^{g_{2}}\right), \\
	&\text{if } t^{g_{1}}=t^{g^{'}_{1}}\bridge t^{g^{''}_{1}}\text{ and } t^{g_{2}}=t^{g^{'}_{2}}\vee t^{g^{''}_{2}};\\
	&\left(t^{g_1}\ast_h t^{g^{'}_2}\right)\bridge t^{g^{''}_{2}}+t^{g^{'}_{1}}\vee \left(t^{g^{''}_{1}}\ast_h t^{g_{2}}\right), \\
	&\text{if } t^{g_{1}}=t^{g^{'}_{1}}\vee t^{g^{''}_{1}}\text{ and } t^{g_{2}}=t^{g^{'}_{2}}\bridge t^{g^{''}_{2}};\\
	&\left(t^{g_1}\ast_h t^{g^{'}_2}\right)\bridge t^{g^{''}_{2}}+t^{g^{'}_{1}}\bridge \left(t^{g^{''}_{1}}\ast_h t^{g_{2}}\right), \\
	&\text{if } t^{g_{1}}=t^{g^{'}_{1}}\bridge t^{g^{''}_{1}}\text{ and } t^{g_{2}}=t^{g^{'}_{2}}\bridge t^{g^{''}_{2}},
	\end{cases}
	\end{align}
	for $t^{g_1}\in \left(Y^{n_1}\right)^{g_1},t^{g_2}\in \left(Y^{n_2}\right)^{g_2}$,where $g_1=g^{'}_1+g^{''}_1$ and $g_2=g^{'}_2+g^{''}_2$ in the first case, $g_1=g^{'}_1+g^{''}_1+1$ and $g_2=g^{'}_2+g^{''}_2$ in the second case, $g_1=g^{'}_1+g^{''}_1$ and $g_2=g^{'}_2+g^{''}_2+1$ in the third case and $g_1=g^{'}_1+g^{''}_1+1$ and $g_2=g^{'}_2+g^{''}_2+1$ in the fourth case.
\end{defn}
We need to compute $\oneloop\ast_h\oneloop$. The previous formula forces to consider graphs with two independent loops contracted to the same leaf. In fact,
\begin{align}
	\oneloop\ast_h\oneloop&=(|\bridge|)\ast_h(|\bridge|)\notag\\
	&=|\bridge\oneloop+\oneloop\bridge|\notag\\
	&=\twoonelooponeirreg+\onetwolooponeirreg.
\end{align}
These type of graphs don't satisfy the Riemann surface's Euler characteristic formula $\chi=2-2g-k$ since the order of the underlying tree is 2 which should be equal to $-\chi$, the genus is $g=2$ and the number of punctures is $k=1$. So we can't associate a Riemann surface to this kind of graphs as we do with $\onetwo$ or $\onetwoloopone$ or any other graph that has only one loop contracted with each leaf. We call these graphs irregular:
\begin{defn}
	An irregular graph is a graph $t^g\in (Y^n)^g$ that has two edges associated to two loops contracted in the same leaf of the underlying pbt.
\end{defn} 

In the following we give some examples of computation of products of graphs.
 %In some cases the product may be 0 as an iterated consequence of the nilpotency of $\oneloop$.
 
\begin{defn2}
	\begin{align}
	\twooneloopone\ast_h\oneloop&=(|\bridge\tree)\ast_h(|\bridge|)\notag\\
	&=(\twooneloopone\ast_h|)\bridge |+|\bridge(\tree\ast_h\oneloop)\notag\\
	&=\twooneloopone\bridge|+|\bridge\left(\twoonelooptwo+\onetwolooptwo\right)\notag\\
	&=\threetwolooptwo+\threetwoloopthree+\threetwoloopone.
	\end{align}
\end{defn2}
 \begin{defn2}
 	\begin{align}
 	\twooneloopone\ast_h\twoonelooptwo=&(\twooneloopone\ast_h |)\vee\oneloop+|\bridge(\tree\ast_h\twoonelooptwo)\notag\\
 	=&\twooneloopone\vee\oneloop+|\bridge\left(\threeoneloopone+\threeonelooptwo\right)\notag\\
 	=&\fourtwooloopone+\fourtwooloopthree+\fourtwooloopfour.
 	\end{align}
 \end{defn2}
\begin{defn2}
	\begin{align}
		\twoonelooptwo\ast_h\oneloop&=(|\vee\oneloop)\ast_h(|\bridge|)\notag\\
		&=|\vee(\oneloop\ast_h\oneloop)+\twoonelooptwo\bridge |\notag\\
		&=|\vee(\twoonelooponeirreg+\onetwolooponeirreg)+\twothreeonetwoloopirreg\notag\\
		&=\threetwoonetwoloopirreg+\threeonetwotwoloopirreg+\twothreeonetwoloopirreg\
	\end{align}
\end{defn2}
\begin{thm}
	The module $k[Y^\infty]_h$ with the product (\ref{eq:quantumproduct}) is an associative algebra.
\end{thm}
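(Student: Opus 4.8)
The plan is to extract from the four cases of (\ref{eq:quantumproduct}) a single recursive identity and then run the induction that proves associativity of the Loday--Ronco $\ast$-product. First I would fix the unit convention $|\ast_h t^{g}=t^{g}\ast_h|=t^{g}$ (the analogue of (\ref{eq:shuffleident2})) and note that every graph $t^{g}$ of positive order admits a unique decomposition $t^{g}=t^{g'}\!\circ t^{g''}$ with $\circ\in\{\vee,\bridge\}$, according as the two root-adjacent leaves of $t^{g}$ are joined by a loop or not; together with the termination of the recursion (the orders of the underlying trees strictly decrease on its right-hand side) and the unit convention, this makes the four cases of (\ref{eq:quantumproduct}) mutually exclusive and exhaustive and shows that $\ast_h$ is a well-defined bilinear map. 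The key observation is that, writing $t^{g_1}=t^{g_1'}\!\circ_1 t^{g_1''}$ and $t^{g_2}=t^{g_2'}\!\circ_2 t^{g_2''}$ with $\circ_1,\circ_2\in\{\vee,\bridge\}$, all four cases collapse into the single uniform rule
\[
t^{g_1}\ast_h t^{g_2}=\bigl(t^{g_1}\ast_h t^{g_2'}\bigr)\circ_2 t^{g_2''}+t^{g_1'}\circ_1\bigl(t^{g_1''}\ast_h t^{g_2}\bigr),
\]
in which the grafting attached to the first summand is always the one coming from the right factor, and the grafting attached to the second is always the one coming from the left factor; thus the decorations $\circ_i$ are merely carried along by the recursion, exactly as plain $\vee$ is for planar binary trees.

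Granting this, I would prove $(t^{g_1}\ast_h t^{g_2})\ast_h t^{g_3}=t^{g_1}\ast_h(t^{g_2}\ast_h t^{g_3})$ for graphs $t^{g_i}$ by induction on $n_1+n_2+n_3$, the sum of the orders of the three underlying trees, and then extend it to all of $k[Y^\infty]_h$ by bilinearity. The base case is when some factor equals $|$, where the identity follows at once from the unit convention. For the inductive step I would write $t^{g_i}=t^{g_i'}\!\circ_i t^{g_i''}$ and expand both sides with the uniform rule, extending the graftings over the linear combinations of graphs that occur. Applying the rule first to $t^{g_1}\ast_h t^{g_2}$ and then to the product with $t^{g_3}$ produces four terms on the left; the two ending in $\circ_3 t^{g_3''}$ recombine into $\bigl((t^{g_1}\ast_h t^{g_2})\ast_h t^{g_3'}\bigr)\circ_3 t^{g_3''}$, which by the induction hypothesis equals $\bigl(t^{g_1}\ast_h(t^{g_2}\ast_h t^{g_3'})\bigr)\circ_3 t^{g_3''}$, while the other two are $\bigl(t^{g_1}\ast_h t^{g_2'}\bigr)\circ_2\bigl(t^{g_2''}\ast_h t^{g_3}\bigr)$ and $t^{g_1'}\circ_1\bigl((t^{g_1''}\ast_h t^{g_2})\ast_h t^{g_3}\bigr)$, the induction hypothesis turning the last into $t^{g_1'}\circ_1\bigl(t^{g_1''}\ast_h(t^{g_2}\ast_h t^{g_3})\bigr)$. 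Expanding $t^{g_1}\ast_h(t^{g_2}\ast_h t^{g_3})$ the same way yields precisely those three surviving terms, so the two sides agree. Term by term this is Loday and Ronco's argument for $k[Y^\infty]$, with each $\vee$ replaced by the relevant decorated grafting.

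The step I expect to be most delicate is not the algebraic identity — which, once the uniform rule is in hand, is essentially formal — but the preliminary bookkeeping that justifies it. One must check that every graph of positive order in the basis $\bigcup_{n,g}(Y^n)^g$ has exactly one decomposition $t^{g'}\!\vee t^{g''}$ or $t^{g'}\!\bridge t^{g''}$, so that the four cases of (\ref{eq:quantumproduct}) never overlap and never leave a gap, in particular on the irregular graphs whose loops share a leaf; and that a decorated grafting applied to a linear combination of graphs distributes over it with the decomposition passing to every summand, so that each subsequent application of the uniform rule sees exactly the graftings claimed above. One should also confirm that the four different genus relations in (\ref{eq:quantumproduct}) do not disturb the induction: a $\bridge$-decomposition lowers the genus as well as the order, so the orders of the underlying trees still strictly decrease on the right-hand side of the recursion and the induction on $n_1+n_2+n_3$ closes. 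With these points settled the theorem follows.
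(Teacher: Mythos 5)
Your proof is correct, and its computational core is the same as the paper's: expand both sides of the associativity identity with the recursive definition (\ref{eq:quantumproduct}), recombine, and invoke an induction hypothesis on the sub-products. Where you genuinely differ is in the organization of that induction. The paper distinguishes the possible $\vee/\bridge$ decompositions of the three factors, works out one representative case ($t^{g_1}$ of $\bridge$-type, $t^{g_2},t^{g_3}$ of $\vee$-type) explicitly, and inducts on the total genus $g_1+g_2+g_3$, anchoring the induction at genus $0$, where associativity is the known Loday--Ronco result. You instead collapse the four clauses of (\ref{eq:quantumproduct}) into one decorated rule (the grafting on the first summand always carried by the right factor, on the second by the left factor) and induct on $n_1+n_2+n_3$, the total order of the underlying trees, with the unit $|$ as base case. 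Your choice of induction parameter is actually the more robust one: in the recombined term $\bigl((t^{g_1}\ast_h t^{g_2})\ast_h t^{g_3'}\bigr)\circ_3 t^{g_3''}$ the genus of $t^{g_3'}$ need not be smaller than that of $t^{g_3}$ (e.g.\ when $g_3''=0$), so an induction purely on genus does not visibly decrease there, whereas the order of the underlying tree strictly drops in every recursive sub-call, for both $\vee$- and $\bridge$-decompositions. The price you pay is that you re-prove the genus-$0$ case along the way rather than quoting Loday--Ronco, which costs nothing, and that you must justify the preliminary bookkeeping you flag yourself: that every graph in $\bigcup_{n,g}(Y^n)^g$ of positive order has exactly one decomposition $t^{g'}\vee t^{g''}$ or $t^{g'}\bridge t^{g''}$ (at most one loop can straddle the root vertex, also for the irregular graphs), and that the decorated graftings extend bilinearly so the uniform rule applies to the sums of graphs produced by the recursion. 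With those points recorded, your argument is complete and, term by term, reproduces the identity the paper verifies case by case.
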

\begin{proof}
	We must show that the product (\ref{eq:quantumproduct}) is associative:
	\begin{equation}\label{eq:associative}
	t^{g_{1}}\ast_h\left(t^{g_{2}}\ast_h t^{g_{3}}\right)=\left(t^{g_1}\ast_h t^{g_{2}}\right)\ast_h t^{g_{3}}
	\end{equation}
	The expression (\ref{eq:quantumproduct}) for the product distinguishes the cases where the graphs $t^g$ decomposes as $t^g=t^{g'}\vee t^{g''}$ or $t^g=t^{g'}\bridge t^{g''}$. Then in (\ref{eq:associative}) we must consider the 6 possibilities for $t^{g_1},t^{g_2}$ and $t^{g_3}$. We will focus on the case where $t^{g_1}=t^{g_{1}^{'}}\bridge t^{g_{1}^{''}},t^{g_{2}}=t^{g_{2}^{'}}\vee t^{g_{2}^{''}}$ and $t^{g_{3}}=t^{g_{3}^{'}}\vee t^{g_{3}^{''}}$, the other cases being similar. We proceed by induction on the genus, the case of genus 0 being the original Loday-Ronco product which is known to be associative. For the total genus $g=g_1+g_2+g_3$ and after some algebra, the left-hand side of (\ref{eq:associative}) is equal to
	\begin{align}
	&t^{g^{'}_1}\bridge\Biggl(t^{g^{''}_{1}}\ast_h\Bigl(t^{g^{'}_2}\vee\bigl(t^{g^{''}_{2}}\ast_h(t^{g^{'}_{3}}\vee t^{g^{''}_{3}})\bigr)\Bigr)\Biggr)\\
	+&\bigl((t^{g^{'}_{1}}\bridge t^{g^{''}_{1}})\ast_h t^{g^{'}_{2}}\bigr)\vee\bigl(t^{g^{''}_{2}}\ast_h(t^{g^{'}_{3}}\vee t^{g^{''}_{3}})\bigr)\notag\\
	+&t^{g^{'}_{1}}\bridge\Bigl(t^{g^{''}_{1}}\ast_h\bigl((t^{g^{'}_{2}}\vee t^{g^{''}_{2}})\ast_ht^{g^{'}_{3}}\bigr)\vee t^{g^{''}_{3}}\Bigr)\notag\\
	+&\Bigl(\bigl((t^{g^{'}_{1}}\bridge t^{g^{''}_{1}})\ast_h(t^{g^{'}_{2}}\vee t^{g^{''}_{2}})\bigr)\ast_ht^{g^{'}_{3}}\Bigr)\vee t^{g^{''}_{3}},\notag
	\end{align}
	where the associative for a lower total genus was assumed, and the right-hand side is equal to
	\begin{align}
	&t^{g^{'}_{1}}\bridge\Bigl(t^{g^{''}_{1}}\ast_h\bigl((t^{g^{'}_{2}}\vee t^{g^{''}_{2}})\ast_h(t^{g^{'}_{3}}\vee t^{g^{''}_{3}})\bigr)\Bigr)\\
	+&\Biggl(\Bigl(t^{g^{'}_{1}}\bridge\bigl(t^{g^{''}_{1}}\ast_h(t^{g^{'}_{2}}\vee t^{g^{''}_{2}})\bigr)\Bigr)\ast_ht^{g^{'}_{3}}\Biggr)\vee t^{g^{''}_{3}}\notag\\
	+&\bigl((t^{g^{'}_{1}}\bridge t^{g^{''}_{1}})\ast_h t^{g^{'}_{2}}\bigr)\vee\bigl(t^{g^{''}_{2}}\ast_h(t^{g^{'}_{3}}\vee t^{g^{''}_{3}})\bigr)\notag\\
	+&\Bigl(\bigl((t^{g^{'}_{1}}\bridge t^{g^{''}_{1}})\ast_h(t^{g^{'}_{2}}\vee t^{g^{'}_{2}})\bigr)\ast_h t^{g^{'}_{3}}\Bigr)\vee t^{g^{''}_{3}}.\notag
	\end{align}
	Using (\ref{eq:quantumproduct}) both sides are seen to be equal.
\end{proof}

Accordingly the co-product must also be generalized. In \cite{MR1654173} it is shown that the co-product of  a planar binary tree $t=t_1\vee t_2$ of order $n$ satisfies the recursion relation
\begin{equation}
\Delta(t)=\sum_{j,k}t_{1j}^{'}\ast t_{2k}^{'}\otimes t^{''}_{1(p-j)}\vee t^{''}_{2(q-k)}+t\otimes |,
\end{equation}
with the notation convention that $\Delta(t_1)=\sum_j t_{1j}^{'}\otimes t^{''}_{1(p-j)}$ and
$\Delta(t_2)=\sum_j t_{2k}^{'}\otimes t^{''}_{2(q-k)}$ and $n=p+q+1$. We base our definition on this relation and generalize it to the following: 
\begin{defn}\label{def:co-product}
For $t^g\in(Y^{n})^g$, $t^g=t^{g_1}\vee t^{g_2}$, $t^{g_1}\in(Y^p)^{g_1}$,  $t^{g_2}\in(Y^q)^{g_2}$ with $n=p+q+1$, $g=g_1+g_2$,  and $\Delta_h (t^{g_1})=\sum_{j,g^{'}_1,g^{''}_1} (t^{'}_j)^{g'_1}\otimes (t^{''}_{p-j})^{g^{''}_1}, \Delta_h (t^{g_2})=\sum_{k,g^{'}_2,g^{''}_2} (t^{'}_k)^{g'_2}\otimes (t^{''}_{q-k})^{g^{''}_2}$ with $g_1=g^{'}_1+g^{''}_1, g_2=g^{'}_2+g^{''}_2 $ we have
\begin{equation}\label{eq:coprodquantum1}
\Delta_h (t^g)=\sum_{j,k,g^{'}_1,g^{'}_2,g^{''}_1,g^{''}_2}\left((t^{'}_j)^{g^{'}_1}\ast_h(t^{'}_k)^{g^{'}_2}\right)\otimes\left((t^{''}_{p-j})^{g^{''}_1}\vee (t^{''}_{q-k})^{g^{''}_2}\right)+t^g\otimes 1;
\end{equation}
for $t^g=t^{g_1}\bridge t^{g_2}$ with $g=g_1+g_2+1$ and otherwise the same conventions we have
\begin{equation}\label{eq:coprodquantum2}
\Delta_h (t^g)=\sum_{j,k,g^{'}_1,g^{'}_2,g^{''}_1,g^{''}_2}\left((t^{'}_j)^{g^{'}_1}\ast_h(t^{'}_k)^{g^{'}_2}\right)\otimes\left((t^{''}_{p-j})^{g^{''}_1}\bridge (t^{''}_{q-k})^{g^{''}_2}\right)+t^g\otimes 1.
\end{equation}
\end{defn}

An immediate consequence of this definition is that $\oneloop$ is also primitive. In fact, we have
\begin{equation}
\oneloop=|\bridge |
\end{equation}
hence, by (\ref{eq:coprodquantum2}) it follows that
\begin{equation}\label{eq:oneloopprim}
\Delta_h\oneloop=1\otimes \oneloop + \oneloop\otimes 1.
\end{equation}
Other examples are as follows:
\begin{defn2}
\begin{equation}
	\Delta_h\left(\twooneloopone\right)=1\otimes\twooneloopone+\twooneloopone\otimes+\tree\otimes\oneloop
\end{equation}
which is a consequence of (\ref{eq:coprodquantum2}) and (\ref{eq:oneloopprim}).
\end{defn2}
\begin{defn2}
	\begin{align}
		\Delta_h\left(\twoonelooponeirreg\right)&=\sum1^{'}\ast_h\oneloop^{'}\otimes1^{''}\bridge\oneloop^{''}+\twoonelooponeirreg\otimes1\notag\\
		&=1\otimes\twoonelooponeirreg+\twoonelooponeirreg\otimes1+\oneloop\otimes\oneloop
	\end{align}
	which is again a consequence of (\ref{eq:coprodquantum2}) and (\ref{eq:oneloopprim}).
\end{defn2}

\begin{thm}
	The algebra $\mathcal{A}=k[Y^\infty]_h$ with the $\ast_h$ product defined in (\ref{eq:quantumproduct}) and the $\Delta_h$ co-product defined in (\ref{eq:coprodquantum1}) and (\ref{eq:coprodquantum2}) is a Hopf algebra.
\end{thm}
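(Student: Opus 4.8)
The goal is to exhibit $\mathcal{A}=k[Y^\infty]_h$, equipped with $\ast_h$, the unit $1=|$, the co-product $\Delta_h$, and the co-unit $\varepsilon\colon\mathcal{A}\to k$ that projects onto the total-degree-zero component, as a connected graded bi-algebra; the existence of an antipode, and hence the Hopf property, then follows automatically. I would begin from what is already in hand. Associativity of $\ast_h$ is the theorem proved above, and $1=|$ is a two-sided unit for $\ast_h$ because $|$ has no decomposition of the form $t'\vee t''$ or $t'\bridge t''$ and hence falls under the base clause $t^g\ast_h 1=1\ast_h t^g=t^g$ that extends (\ref{eq:shuffleident2}); likewise $\Delta_h(1)=1\otimes 1$ and $\varepsilon(1)=1$ are conventions at the bottom of the recursions, which are themselves well posed since the two decomposition types are mutually exclusive and each unique. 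I would then verify that the total order $\|t^g\|=n+g$ is a bi-algebra grading: grafting $\vee$ raises $\|\cdot\|$ by $1$ and a bridge $\bridge$ by $2$ (one internal vertex and one loop), so a short count with the constraints $n=p+q+1$ and $g_i=g_i'+g_i''\,(+1)$ occurring in (\ref{eq:quantumproduct}) and (\ref{eq:coprodquantum1})--(\ref{eq:coprodquantum2}) shows total degree is additive under $\ast_h$ and preserved by $\Delta_h$. Since the only graph of total order $0$ is $|$, this presents $\mathcal{A}$ as connected graded; in particular $\varepsilon$ is an algebra map, as $\ast_h$ strictly raises degree unless both arguments lie in $\mathcal{A}_0=k$.

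Next I would establish the co-algebra axioms. The co-unit law rests on the structural fact, proved by induction on $\|t^g\|$, that $\Delta_h(t^g)=1\otimes t^g+t^g\otimes 1+r(t^g)$ where $r(t^g)$ is a sum of tensors both of whose legs have positive total degree. Indeed, applying (\ref{eq:coprodquantum1}) or (\ref{eq:coprodquantum2}) to $t^g=t^{g_1}\vee t^{g_2}$ or $t^g=t^{g_1}\bridge t^{g_2}$, the induction hypothesis supplies the summands $1\otimes t^{g_i}$ and $t^{g_i}\otimes 1$ inside each $\Delta_h(t^{g_i})$, so the index choice $j=k=0$, $g_1'=g_2'=0$ contributes $(|\ast_h|)\otimes(t^{g_1}\vee t^{g_2})=1\otimes t^g$ (or the analogue with $\bridge$), the explicit summand contributes $t^g\otimes 1$, and in every other summand the left leg $(t'_j)^{g'_1}\ast_h(t'_k)^{g'_2}$ has positive degree (the indices do not all vanish) while the right leg carries the new grafting or bridge vertex, hence is also of positive degree. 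From this shape, $(\varepsilon\otimes\mathrm{id})\Delta_h=\mathrm{id}=(\mathrm{id}\otimes\varepsilon)\Delta_h$ is immediate.

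Co-associativity and the bi-algebra compatibility $\Delta_h(x\ast_h y)=\Delta_h(x)\ast_h\Delta_h(y)$ in $\mathcal{A}\otimes\mathcal{A}$ I would prove together, by a single induction on total order, because the co-product recursion itself invokes $\ast_h$ through the left leg $(t'_j)^{g'_1}\ast_h(t'_k)^{g'_2}$. For compatibility one expands $x\ast_h y$ by the relevant case of (\ref{eq:quantumproduct}), applies (\ref{eq:coprodquantum1})--(\ref{eq:coprodquantum2}) to each resulting $\vee$- or $\bridge$-term, and compares with $\Delta_h(x)\ast_h\Delta_h(y)$ computed in the tensor-square algebra, matching the two expansions by means of the induction hypothesis on strictly smaller products, the associativity of $\ast_h$, and the four relations between $\vee$, $\bridge$ and $\ast_h$ packaged in (\ref{eq:quantumproduct}). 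Co-associativity is then handled by the same device: apply $\Delta_h\otimes\mathrm{id}$ and $\mathrm{id}\otimes\Delta_h$ to $\Delta_h(t^g)$, use compatibility (already available in lower total degree) to evaluate the $\Delta_h$ of the products appearing in the left legs, and reduce to co-associativity of $\Delta_h$ on the smaller pieces $t^{g_1},t^{g_2}$; structurally this mirrors the induction that proved associativity of $\ast_h$, with the four product cases traded for the two co-product cases. I expect this combined case analysis to be the main obstacle: a top-level bridge on either argument must be propagated simultaneously through the four-case product formula and the two-case co-product formula, and one must in particular check that the irregular graphs with two loops contracted at a single leaf, such as $\onetwolooponeirreg$, occur with matching coefficients on both sides of each identity --- essentially a loop-decorated refinement of Loday and Ronco's verification that $\Delta$ is a morphism on $k[Y^\infty]$.

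With $\mathcal{A}=k[Y^\infty]_h$ now a connected graded bi-algebra, I would conclude by the standard fact that any such bi-algebra carries a unique antipode: set $S(1)=1$ and, for $t^g$ of positive total order with reduced co-product $\bar\Delta_h(t^g)=\sum(t')^{g'}\otimes(t'')^{g''}$, put $S(t^g)=-t^g-\sum S\big((t')^{g'}\big)\ast_h(t'')^{g''}$, the recursion terminating because $(t')^{g'}$ and $(t'')^{g''}$ have strictly smaller total order than $t^g$. This proves that $\mathcal{A}=k[Y^\infty]_h$ is a Hopf algebra.
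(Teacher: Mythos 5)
Your proposal is correct and shares the overall architecture of the paper's proof: associativity is imported from the preceding theorem, co-associativity and the compatibility $\Delta_h(x\ast_h y)=\Delta_h(x)\ast_h\Delta_h(y)$ are established by induction on the total order $\|t^g\|=n+g$, and the antipode comes for free from the connected grading. Where you genuinely diverge is in the co-associativity step. The paper proves co-associativity first, by tensoring together induction-hypothesis identities for two auxiliary graphs $s_1^{l_1},s_2^{l_2}$ and then searching for choices of $s_1,s_2$ whose product contains the required summand $(t^{''}_{p-j})^{g^{''}_1}\bridge (t^{''}_{q-k})^{g^{''}_2}$ in the last tensor slot, with separate handling of the limit cases $t^{g_1}=|$ or $t^{g_2}=|$ and of graphs with a middle loop, and along the way it already invokes the homomorphism property ``to be shown in a moment.'' You instead run a single simultaneous induction and apply compatibility, available in strictly lower total degree, directly to the left legs $(t^{'}_j)^{g^{'}_1}\ast_h(t^{'}_k)^{g^{'}_2}$ produced by (\ref{eq:coprodquantum1})--(\ref{eq:coprodquantum2}), reducing co-associativity of $t^g$ to co-associativity of its branches; this eliminates the auxiliary-graph device and makes the mutual dependence of the two statements transparent, which is the standard and cleaner route. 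You also make explicit several points the paper leaves implicit: the unit clause extending (\ref{eq:shuffleident2}), additivity of the total degree under $\ast_h$ and its preservation by $\Delta_h$, connectedness, the co-unit axiom via the decomposition $\Delta_h(t^g)=1\otimes t^g+t^g\otimes 1+r(t^g)$ with both legs of $r$ of positive degree, and the antipode recursion. The one item you leave at the level of a plan is the expansion-and-compare case analysis for the compatibility identity itself (including the bookkeeping of irregular graphs such as $\onetwolooponeirreg$); this is precisely the computation the paper carries out for one representative choice of top-level $\bridge$ and $\vee$ and declares ``similar'' for the remaining cases, so your sketch sits at essentially the same level of completeness there.
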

\begin{proof}
	We have already shown the associativity of $\ast_h$. We need to show co-associativity of $\Delta_h$ and the fact that the product and the co-product are homomorphisms \cite{abe2004hopf}. We need also to define the co-unit and discuss the antipode.
	
	The co-associativity of $\Delta_h$ can be shown by induction on the total order of the graph, given by the sum of the genus and the order of the underlying tree. So lets assume that $t^g\in(Y^n)^g$ and that $t^g=t^{g_1}\bridge t^{g_2}$ with $g=g_1+g_2+1$. The case $t^g=t^{g_1}\vee t^{g_2}$ is similar. We have to show that
	\begin{equation}
		\left(\Delta_h\otimes \text{Id}\right)\Delta_h (t^g)=\left(\text{Id}\otimes\Delta_h \right)\Delta_h (t^g).
	\end{equation}
	Using (\ref{eq:coprodquantum2}) the left-hand side gives
	\begin{align}
		&\sum\Delta_h\left((t^{'}_j)^{g^{'}_{1}}\ast_h(t^{'}_k)^{g^{'}_{2}}\right)\otimes\left((t^{''}_{p-j})^{g^{''}_1}\bridge (t^{''}_{q-k})^{g^{''}_2}\right)+\notag\\
		&\sum\left((t^{'}_j)^{g^{'}_1}\ast_h(t^{'}_k)^{g^{'}_2}\right)\otimes\left((t^{''}_{p-j})^{g^{''}_1}\bridge (t^{''}_{q-k})^{g^{''}_2}\right)\otimes 1+t^g\otimes 1\otimes1
	\end{align}
	and the right-hand side is
	\begin{align}
		&\sum\left((t^{'}_j)^{g^{'}_{1}}\ast_h(t^{'}_k)^{g^{'}_{2}}\right)\otimes\Delta_h\left((t^{''}_{p-j})^{g^{''}_1}\bridge (t^{''}_{q-k})^{g^{''}_2}\right)+t^g\otimes 1\otimes1\notag\\
		=&\sum\left((t^{'}_j)^{g^{'}_{1}}\ast_h(t^{'}_k)^{g^{'}_{2}}\right)\otimes\left(\left((t^{''}_{p-j})^{g^{''}_1}\right)^{'}\ast_h \left((t^{''}_{q-k})^{g^{''}_2}\right)^{'}\right)\otimes\notag\\
		&\left(\left((t^{''}_{p-j})^{g^{''}_1}\right)^{''}\bridge \left((t^{''}_{q-k})^{g^{''}_2}\right)^{''}\right)\notag\\
		&+\sum\left((t^{'}_j)^{g^{'}_{1}}\ast_h(t^{'}_k)^{g^{'}_{2}}\right)\otimes\left((t^{''}_{p-j})^{g^{''}_1}\bridge (t^{''}_{q-k})^{g^{''}_2}\right)\otimes 1+t^g\otimes 1\otimes1
	\end{align}
	in a condensed notation for the second iteration of the co-product. Hence it all ends up in proving the equation
	\begin{align}\label{eq:coassociative}
		&\sum\left(\left((t^{'}_j)^{g^{'}_{1}}\right)^{'}\ast_h\left((t^{'}_k)^{g^{'}_{2}}\right)^{'}\right)\otimes\left(\left((t^{'}_j)^{g^{'}_{1}}\right)^{''}\ast_h\left((t^{'}_k)^{g^{'}_{2}}\right)^{''}\right)\otimes\notag\\
		&\left((t^{''}_{p-j})^{g^{''}_1}\bridge (t^{''}_{q-k})^{g^{''}_2}\right)\notag\\
		=&\sum\left((t^{'}_j)^{g^{'}_{1}}\ast_h(t^{'}_k)^{g^{'}_{2}}\right)\otimes\left(\left((t^{''}_{p-j})^{g^{''}_1}\right)^{'}\ast_h \left((t^{''}_{q-k})^{g^{''}_2}\right)^{'}\right)\otimes\notag\\
		&\left(\left((t^{''}_{p-j})^{g^{''}_1}\right)^{''}\bridge \left((t^{''}_{q-k})^{g^{''}_2}\right)^{''}\right)
	\end{align}
	which as mentioned above is proven by induction on the total order of a graph $t^g\in (Y^n)^g$ given by sum of the order $n$ of the underlying tree and genus $g$. For $g=0$ it is the well known co-associativity of the Loday-Ronco Hopf algebra, so let's assume that $g\ge1$. For $g=n=1$ we prove it directly from the definition:
	\begin{align}
		(\Delta_h\otimes\text{Id})\Delta_h(\oneloop)&=(\Delta_h\otimes\text{Id})\left(1\otimes\oneloop+\oneloop\otimes1\right)\notag\\
		&=1\otimes1\otimes\oneloop+1\otimes\oneloop\otimes1+\oneloop\otimes1\otimes1\notag\\
		&=(\text{Id}\otimes\Delta_h)\Delta_h(\oneloop).
	\end{align}

	We assume co-associativity to be true for graphs $s^l=s^{l_1}\vee s^{l_2}$ or $s^l=s^{l_1}\bridge s^{l_2}$ both in $\oplus_{m=1}^{n-1}\oplus_{f=1}^{g}(Y^m)^f$ so that $||s^l||\le n+g-1$, which means that
	\begin{equation}
		(\Delta_h\otimes\text{Id})\Delta_h(s^l)=(\text{Id}\otimes\Delta_h)\Delta_h(s^l).
	\end{equation}
	This is the same as
	\begin{equation}
		(\Delta_h\otimes\text{Id})\sum_{j=1}^a(s^{'}_j)^{l'}\otimes (s_{a-j}^{''})^{l^{''}}=(\text{Id}\otimes\Delta_h)\sum_{j=1}^a(s^{'}_j)^{l^{'}}\otimes (s^{''}_{a-j})^{l^{''}}
	\end{equation}
	or
	\begin{equation}
		\sum\left((s^{'}_j)^{l'}\right)^{'}\otimes\left((s^{'}_j)^{l'}\right)^{''}\otimes (s^{''}_{a-j})^{l^{''}}=\sum (s^{'}_j)^{l^{'}}\otimes \left((s^{''}_{a-j})^{l^{''}}\right)^{'}\otimes \left((s^{''}_{a-j})^{l^{''}}\right)^{''}
	\end{equation}
	with the order of $s^l$ equal to $a$. Considering this relation applied to two graphs $s_1^{l_1},s_2^{l_2}\in\oplus_{m=1}^{n-1}\oplus_{f=1}^{g}(Y^m)^f$ and multiplying term by term we get,
	\begin{multline*}
		\sum\left((s^{'}_{1,j_1})^{l_1^{'}}\right)^{'}\ast_h\left((s^{'}_{2,j_2})^{l_2^{'}}\right)^{'}\otimes\left((s^{'}_{1,j_1})^{l_1^{'}}\right)^{''}\ast_h\left((s^{'}_{2,j_2})^{l_2^{'}}\right)^{''}\otimes\notag\\
		(s^{''}_{1,a_1-j_1})^{l_1^{''}}\ast_h(s^{''}_{2,a_2-j_2})^{l_2^{''}}
	\end{multline*}
	\begin{multline}\label{eq:chooses12}
		=\sum (s^{'}_{1,j_1})^{l_1^{'}}\ast_h(s^{'}_{2,j_2})^{l_2^{'}}\otimes \left((s^{''}_{1,a_1-j_1})^{l_1^{''}}\right)^{'}\ast_h\left((s^{''}_{2,a_2-j_2})^{l_2^{''}}\right)^{'}\otimes\\ \left((s^{''}_{1,a_1-j_1})^{l_1^{''}}\right)^{''}\ast_h\left((s^{''}_{2,a_2-j_2})^{l_2^{''}}\right)^{''}.
	\end{multline}
	Now comparing with (\ref{eq:coassociative}) we almost have the same expression, except that here the last term in the tensor product on the left-hand side of the equation is 
	\begin{equation}\label{eq:lasttensor}
		(s^{''}_{1,a_1-j_1})^{l_1^{''}}\ast_h(s^{''}_{2,a_2-j_2})^{l_2^{''}}
	\end{equation}
	and there the product $\ast_h$ is replaced by $\bridge$, and likewise on the right-hand side. If we could find two graphs $s_1^{l_1},s_2^{l_2}$ such that $(s^{''}_{1,a_1-j_1})^{l_1^{''}}\ast_h(s^{''}_{2,a_2-j_2})^{l_2^{''}}$ would be equal to 
	\begin{equation}\label{eq:lasttensor2}
		(t^{''}_{p-j})^{g^{''}_1}\bridge (t^{''}_{q-k})^{g^{''}_2}
	\end{equation} 
	then we were done. But this is not possible in general since the product $\ast_h$ between two graphs gives a sum of two or more graphs, except in the trivial case where one of the factors is the identity. The most we can expect is $(t^{''}_{p-j})^{g^{''}_1}\bridge (t^{''}_{q-k})^{g^{''}_2}$ to be one of the resulting summands of (\ref{eq:lasttensor}) for appropriate  $s_1^{l_1},s_2^{l_2}$.
	
	So let us search for two graphs of the form $s_1^{l_1}=s_{1,a}^{l_{1,a}}\bridge  s_{1,b}^{l_{1,b}},s_2^{l_2}=s_{2,a}^{l_{2,a}}\vee s_{2,b}^{l_{2,b}}$ with the required conditions. Note that (\ref{eq:lasttensor}) is equal to
	\begin{equation}\label{eq:lasttensor3}
		\left((s_{1})^{l_1})\ast_h((s_{2})^{l_2}\right)^{''}
	\end{equation}on a tensor product and under a sum sign due to the fact that the co-product is an homomorphism, a property we will show in a moment. Also from (\ref{eq:coprodquantum2}) we see that (\ref{eq:lasttensor2}) is equal to the second tensor product factor of the $t^g$ co-product:
	\begin{equation}
		\cdots\otimes\left(t^{g_1}\bridge t^{g_2}\right)^{''} -t^g\otimes 1,
	\end{equation}
	excluding the case $\left(t^{g_1}\bridge t^{g_2}\right)^{''}=1$ and under a sum sign. Let us compute $s_1^{l_1}\ast_h s_2^{l_2}$ explicitly:
	\begin{multline}\label{eq:proofcoasso}
		(s_{1,a}^{l_{1,a}}\bridge s_{1,b}^{l_{1,b}})\ast_h (s_{2,a}^{l_{2,a}}\vee s_{2,b}^{l_{2,b}})=s_{1,a}^{l_{1,a}}\bridge(s_{1,b}^{l_{1,b}}\ast_h s_2^{l_2})+(s_1^{l_1}\ast_h s_{2,a}^{l_{2,a}})\vee s_{2,b}^{l_{2,b}}.
	\end{multline}
	First we take two limit situations: $t^{g_1}=|$ such that $t^g=|\bridge t^{g_2}$, and $t^{g_2}=|$ such that $t^g=t^{g_1}\bridge |$. Since $\|t^g\|=g+n$, in both situations we have that the right or left branches have degree $n-1$ and genus $g-1$ and so satisfy the induction hypothesis. We consider the first case, the second being similar. We take $s_1^1=\oneloop$ and $s_2^{g_2}=t^{g_2}$.
	
	Let us replace our choices in (\ref{eq:chooses12}), that can be rewritten as
	\begin{align}\label{eq:chooses12bis}
		&\sum\left((s_{1}^{l_1}\ast_hs_{2}^{l_2})^{'}\right)^{'}\otimes\left((s_{1}^{l_1}\ast_hs_{2}^{l_2})^{'}\right)^{''}\otimes(s_{1}^{l_1}\ast_hs_{2}^{l_2})^{''}\notag\\
		&=\sum (s_{1}^{l_1}\ast_hs_{2}^{l_2})^{'}\otimes \left((s_{1}^{l_1}\ast_hs_{2}^{l_2})^{''}\right)^{'}\otimes \left((s_{1}^{l_1}\ast_hs_{2}^{l_2})^{''}\right)^{''}.
	\end{align}
	We have
	\begin{align}
		\sum&\left((|\bridge t^{g_2}+(\oneloop\ast_ht_{a}^{g_{2,a}})\vee t_{b}^{g_{2,b}})^{'}\right)^{'}\otimes
		\left((|\bridge t^{g_2}+(\oneloop\ast_ht_{a}^{g_{2,a}})\vee t_{b}^{g_{2,b}})^{'}\right)^{''}\otimes\notag\\
		&\left(|\bridge t^{g_2}+(\oneloop\ast_ht_{a}^{g_{2,a}})\vee t_{b}^{g_{2,b}}\right)^{''}\notag\\
		=\sum&\left(|\bridge t^{g_2}+(\oneloop\ast_ht_{a}^{g_{2,a}})\vee t_{b}^{g_{2,b}}\right)^{'}\otimes
		\left((|\bridge t^{g_2}+(\oneloop\ast_ht_{a}^{g_{2,a}})\vee t_{b}^{g_{2,b}})^{''}\right)^{'}\otimes\notag\\
		&\left((|\bridge t^{g_2}+(\oneloop\ast_ht_{a}^{g_{2,a}})\vee t_{b}^{g_{2,b}})^{''}\right)^{''}.
	\end{align}
	Collecting terms of graphs with the same topology we get several equations. In particular,
	\begin{align}\label{eq:coassociative2}
		\sum&\left((|\bridge t^{g_2})^{'}\right)^{'}\otimes
		\left((|\bridge t^{g_2})^{'}\right)^{''}\otimes
		(|\bridge t^{g_2})^{''}\notag\\
		=\sum&(|\bridge t^{g_2})^{'}\otimes
		\left((|\bridge t^{g_2})^{''}\right)^{'}\otimes
		\left((|\bridge t^{g_2})^{''}\right)^{''}.
	\end{align}
	On the other hand (\ref{eq:coassociative}) gives in this situation
	\begin{align}\label{eq:coassociative3}
		&\sum\left((t^{g_{2}})^{'}\right)^{'}\otimes\left((t^{g_{2}})^{'}\right)^{''}\otimes
		\left(|\bridge (t^{g_{2}})^{''}\right)\notag\\
		=&\sum (t^{g_{2}})^{'}\otimes \left((t^{g_{2}})^{''}\right)^{'}\otimes
		\left(|\bridge \left((t^{g_{2}})^{''}\right)^{''}\right).
	\end{align}
	In view of (\ref{eq:coprodquantum2}) the two equations aren't exactly the same because the first one has the additional terms that occur when in the first co-product $(|\bridge t^{g_2})^{''}=1$ and in this case $(|\bridge t^{g_2})^{'}=(|\bridge t^{g_2})$. So (\ref{eq:coassociative2}) is equal to
	\begin{align}\label{eq:coassociative4}
		\sum\left(|\bridge t^{g_2}\right)^{'}&\otimes\left(|\bridge t^{g_2}\right)^{''}\otimes 1\notag\\
		&+\sum\left(( t^{g_2})^{'}\right)^{'}\otimes
		\left((t^{g_2})^{'}\right)^{''}\otimes
		\left(|\bridge (t^{g_2})^{''}\right)\notag\\
		=\sum\left(|\bridge t^{g_2}\right)^{'}&\otimes\left(|\bridge t^{g_2}\right)^{''}\otimes 1\notag\\
		&+\sum(t^{g_2})^{'}\otimes
		\left((t^{g_2})^{''}\right)^{'}\otimes
		\left(|\bridge \left((t^{g_2})^{''}\right)^{''}\right).
	\end{align}
	and now this is (\ref{eq:coassociative3}).
	
	In the general case we assume that in all graphs $t^g=t^{g_1}\bridge t^{g_2}\in (Y^n)^g$ each branch has $|t^{g_1}|>0$ and $|t^{g_2}|>0$, which makes the order of the other branch smaller than $n-1$ and so both branches have total order $||t^{g_{1,2}}||< n+g-1$, satisfying the induction hypothesis, as does any graph of the type $|\vee t^{g_{1,2}}$ or $|\bridge t^{g_{1,2}}$ or with the left and right branches reversed. We see from eq. (\ref{eq:proofcoasso}) and the discussion leading to it that we must choose $s_1^{l_1}$ and $s_2^{l_2}$ such that $s_{1,a}^{l_1,a}=t^{g_1}$ and $t^{g_2}$ belongs to the sum given by $s_{1,b}^{l_1,b}\ast_h s_2^{l_2}$. We end up on a recursive procedure where now we must choose $s_{1,b}^{l_1,b}$ and $s_2^{l_2}$ as function of $t^{g_2}$ This will stop when one of the factors is the identity. The simplest possibility consistent with the induction hypothesis is to choose $s_{1,b}^{l_1,b}=|$ and $s_2^{l_2}=t^{g_2}$. In this case we have $s_1^{l_1}=t^{g_1}\bridge |, l_1=g_1+1$ and $s_2^{l_2}=t^{g_2}=t_a^{g_{2,a}}\vee t_b^{g_{2,b}}, l_2=g_2$. This is only possible if $t^{g_2}$ doesn't have a middle loop. 
	
	Using (\ref{eq:proofcoasso}) in (\ref{eq:chooses12bis}) with these choices we have
	\begin{align}
		\sum&\left(\left(t^{g_1}\bridge t^{g_2}+(s_1^{l_1}\ast_hs_{2,a}^{l_{2,a}})\vee s_{2,b}^{l_{2,b}}\right)^{'}\right)^{'}\otimes\notag\\
		&\left(\left(t^{g_1}\bridge  t^{g_2}+(s_1^{l_1}\ast_hs_{2,a}^{l_{2,a}})\vee s_{2,b}^{l_{2,b}}\right)^{'}\right)^{''}\otimes\notag\\
		&\left(t^{g_1}\bridge  t^{g_2}+(s_1^{l_1}\ast_hs_{2,a}^{l_{2,a}})\vee s_{2,b}^{l_{2,b}}\right)^{''}\notag\\
		=\sum&\left(t^{g_1}\bridge  t^{g_2}+(s_1^{l_1}\ast_hs_{2,a}^{l_{2,a}})\vee s_{2,b}^{l_{2,b}}\right)^{'}\otimes\notag\\
		&\left(\left(t^{g_1}\bridge  t^{g_2}+(s_1^{l_1}\ast_hs_{2,a}^{l_{2,a}})\vee s_{2,b}^{l_{2,b}}\right)^{''}\right)^{'}\otimes\notag\\
		&\left(\left(t^{g_1}\bridge  t^{g_2}+(s_1^{l_1}\ast_hs_{2,a}^{l_{2,a}})\vee s_{2,b}^{l_{2,b}}\right)^{''}\right)^{''}.
	\end{align}
	After computing the co-product components using (\ref{eq:coprodquantum1}) and  (\ref{eq:coprodquantum2}) we get
	\begin{align}
		\sum&\left(\left((t^{g_1})^{'}\right)^{'}\ast_h\left((t^{g_2})^{'}\right)^{'}+\left(\left((s_1^{l_1}\ast_hs_{2,a}^{l_{2,a}})\vee s_{2,b}^{l_{2,b}}\right)^{'}\right)^{'}\right)\otimes\notag\\
		&\left(\left((t^{g_1})^{'}\right)^{''}\ast_h\left((t^{g_2})^{'}\right)^{''}+\left(\left((s_1^{l_1}\ast_hs_{2,a}^{l_{2,a}})\vee (s_{2,b}^{l_{2,b}}\right)^{'}\right)^{''}\right)\otimes\notag\\
		&\left((t^{g_1})^{''}\bridge ( t^{g_2})^{''}+\left((s_1^{l_1}\ast_hs_{2,a}^{l_{2,a}})\vee s_{2,b}^{l_{2,b}}\right)^{''}\right)\notag
	\end{align}
	\begin{align}
		=\sum&\left((t^{g_1})^{'}\ast_h (t^{g_2})^{'}+\left((s_1^{l_1}\ast_hs_{2,a}^{l_{2,a}})\vee s_{2,b}^{l_{2,b}}\right)^{'}\right)\otimes\notag\\
		&\left(\left((t^{g_1})^{''}\right)^{'}\bridge\left((t^{g_2})^{''}\right)^{'}+\left(\left((s_1^{l_1}\ast_hs_{2,a}^{l_{2,a}})\vee s_{2,b}^{l_{2,b}}\right)^{''}\right)^{'}\right)\otimes\notag\\
		&\left(\left((t^{g_1})^{''}\right)^{''}\bridge  \left((t^{g_2})^{''}\right)^{''}+\left(\left((s_1^{l_1}\ast_hs_{2,a}^{l_{2,a}})\vee s_{2,b}^{l_{2,b}}\right)^{''}\right)^{''}\right).
	\end{align}
	modulo the terms discussed when deducing (\ref{eq:coassociative4}). 
	Hence we get from this last equation several identities relating graphs of different topologies and total degrees, one of which is
	\begin{align}
		\sum&\left((t^{g_1})^{'}\right)^{'}\ast_h\left((t^{g_2})^{'}\right)^{'}
		\otimes\left((t^{g_1})^{'}\right)^{''}\ast_h\left((t^{g_2})^{'}\right)^{''}\otimes
		(t^{g_1})^{''}\bridge ( t^{g_2})^{''}\notag\\
		=\sum&(t^{g_1})^{'}\ast_h (t^{g_2})^{'}\otimes
		\left((t^{g_1})^{''}\right)^{'}\ast_h\left((t^{g_2})^{''}\right)^{'}\otimes
		\left((t^{g_1})^{''}\right)^{''}\bridge  \left((t^{g_2})^{''}\right)^{''}.
	\end{align}
	This is (\ref{eq:coassociative}) in a condensed notation, as required.
	
	If $t^{g_2}$ has a middle loop we may choose instead $s_2^{l_2}=s_{2,a}^{l_{2,a}}\bridge s_{2,b}^{l_{2,b}}$, again $s_1^{l_1}=t^{g_1}\bridge |$ and $s_2^{l_2}=t^{g_2}$ and repeat the proof after (\ref{eq:lasttensor3}).
	
	The fact that the product and the co-product are homomorphisms of a co-algebra and an algebra respectively is equivalent to the equation \cite{abe2004hopf}
	\begin{align}\label{eq:homom}
		\Delta_h(t^{g_1}_1\ast_h t^{g_2}_2)&:=\sum (t^{g_1}_1\ast_h t^{g_2}_2)^{'}\otimes (t^{g_1}_1\ast_h t^{g_2}_2)^{''}\notag\\
		&=\sum (t^{g_1}_1)^{'}\ast_h (t^{g_2}_2)^{'}\otimes (t^{g_1}_1)^{''}\ast_h (t^{g_2}_2)^{''}.
	\end{align}
We will prove this relation by induction on the total order of the product of two graphs and using the definitions (\ref{eq:quantumproduct}), (\ref{eq:coprodquantum1}) and (\ref{eq:coprodquantum2}). For definiteness we will take $t^{g_1}_1=t^{g_{1,a}}_{1,a}\bridge t^{g_{1,b}}_{1,b}, t^{g_2}_12=t^{g_{2,a}}_{2,a}\vee t^{g_{2,b}}_{2,b}$, other cases being treated similarly. 
The first line of (\ref{eq:homom}) is
\begin{align}
&\Delta_h\left(t^{g_{1,a}}_{1,a}\bridge \left(t^{g_{1,b}}_{1,b}\ast_h t^{g_2}_2\right)	+\left(t^{g_1}_1\ast_h t^{g_{2,a}}_{2,a}\right)\vee t^{g_{2,b}}_{2,b}\right)\notag\\
&=\sum\left(t^{g_{1,a}}_{1,a}\right)^{'}\ast_h \left(t^{g_{1,b}}_{1,b}\ast_h t^{g_2}_2\right)^{'}\otimes \left(t^{g_{1,a}}_{1,a}\right)^{''} \bridge\left(t^{g_{1,b}}_{1,b}\ast_ht^{g_2}_2\right)^{''}\notag\\
&+\left(t^{g_{1,a}}_{1,a}\bridge \left(t^{g_{1,b}}_{1,b}\ast_h t^{g_2}_2\right)\right)\otimes 1\notag\\
&+\sum \left(\left(t^{g_1}_1\ast_h t^{g_{2,a}}_{2,a}\right)^{'}\ast_h (t^{g_{2,b}}_{2,b})^{'}\right)\otimes \left(\left(t^{g_1}_1\ast_h t^{g_{2,a}}_{2,a}\right)^{''}\vee (t^{g_{2,b}}_{2,b})^{''}\right)\notag\\
&+\left((t^{g_1}_1\ast_h t^{g_{2,a}}_{2,a})\vee t^{g_{2,b}}_{2,b}\right)\otimes 1.
\end{align}
We now apply the induction hypothesis to products with total order strictly smaller than the order of the initial product, to get
\begin{align}
	&\Delta_h(t^{g_1}_1\ast_h t^{g_2}_2)=\notag\\
	&\sum\Bigl((t^{g_{1,a}}_{1,a})^{'}\ast_h \bigl((t^{g_{1,b}}_{1,b})^{'}\ast_h (t^{g_2}_2)^{'}\bigr)\Bigr)\otimes \Bigl((t^{g_{1,a}}_{1,a})^{''} \bridge\bigl((t^{g_{1,b}}_{1,b})^{''}\ast_h(t^{g_2}_2)^{''}\bigr)\Bigr)\notag\\
	&+\left(t^{g_{1,a}}_{1,a}\bridge \left(t^{g_{1,b}}_{1,b}\ast_h t^{g_2}_2\right)\right)\otimes 1\notag\\
	&+\sum \Bigl(\bigl((t^{g_1}_1)^{'}\ast_h (t^{g_{2,a}}_{2,a})^{'}\bigr)\ast_h (t^{g_{2,b}}_{2,b})^{'}\Bigr)\otimes \Bigl(\bigl((t^{g_1}_1)^{''}\ast_h (t^{g_{2,a}}_{2,a})^{''}\bigr)\vee (t^{g_{2,b}}_{2,b})^{''}\Bigr)\notag\\
	&+\left((t^{g_1}_1\ast_h t^{g_{2,a}}_{2,a})\vee t^{g_{2,b}}_{2,b}\right)\otimes 1.
\end{align}
Using (\ref{eq:coprodquantum1}) and (\ref{eq:coprodquantum2}) again,
\begin{align}
	&\Delta_h(t^{g_1}_1\ast_h t^{g_2}_2)=\notag\\
	&\sum\Biggl(\left(t^{g_{1,a}}_{1,a}\right)^{'}\ast_h \Bigl((t^{g_{1,b}}_{1,b})^{'}\ast_h \bigl((t^{g_2,a}_{2,a})^{'}\ast_h(t^{g_{2,b}}_{2,b})^{'}\bigr)\Bigr)\Biggr)\otimes\notag\\
	& \Biggl(\left(t^{g_{1,a}}_{1,a}\right)^{''} \bridge\Bigl((t^{g_{1,b}}_{1,b})^{''}\ast\bigl((t^{g_{2,a}}_{2,a})^{''}\vee(t^{g_{2,b}}_{2,b})^{''}\bigr)\Bigr)\Biggr)\notag\\
	&+\sum\Bigl(\left(t^{g_{1,a}}_{1,a}\right)^{'}\ast_h \bigl((t^{g_{1,b}}_{1,b})^{'}\ast_h (t^{g_2}_{2})\bigr)\Bigr)\otimes\left((t^{g_{1,a}}_{1,a})^{''}\bridge(t^{g_{1,b}}_{1,b})^{''}\right)\notag\\
	&+\sum \Biggl(\Bigl(\bigl((t^{g_{1,a}}_{1,a})^{'}\ast_h(t^{g_{1,b}}_{1,b})^{'}\bigr)\ast_h (t^{g_{2,a}}_{2,a})^{'}\Bigr)\ast_h (t^{g_{2,b}}_{2,b})^{'}\Biggr)\otimes\notag\\
	& \Biggl(\Bigl(\bigl((t^{g_{1,a}}_{1,a})^{''}\bridge(t^{g_{1,b}}_{1,b})^{''}\bigr)\ast_h (t^{g_{2,a}}_{2,a})^{''}\Bigr)\vee (t^{g_{2,b}}_{2,b})^{''}\Biggr)\notag\\
	&+\sum \Bigl(\bigl((t^{g_{1}}_{1})\ast_h (t^{g_{2,a}}_{2,a})^{'}\bigr)\ast_h (t^{g_{2,b}}_{2,b})^{'}\Bigr)\otimes
	\left((t^{g_{2,a}}_{2,a})^{''}\vee (t^{g_{2,b}}_{2,b})^{''}\right)\notag\\
	&+\left(t^{g_1}_1\ast_h t^{g_{2}}_{2}\right)\otimes 1.
\end{align}
Using associativity and again (\ref{eq:quantumproduct}) we have
\begin{align}
	&\Delta_h(t^{g_1}_1\ast_h t^{g_2}_2)=\notag\\
	&\sum\Bigl(\bigl((t^{g_{1,a}}_{1,a})^{'}\ast_h (t^{g_{1,b}}_{1,b})^{'}\bigr)\ast_h \bigl((t^{g_2,a}_{2,a})^{'}\ast_h(t^{g_{2,b}}_{2,b})^{'}\bigr)\Bigr)\otimes\notag\\
	& \Bigl(\bigl((t^{g_{1,a}}_{1})^{''}\bridge (t^{g_{1,b}}_{1,b})^{''}\bigr)\ast\bigl((t^{g_{2,a}}_{2,a})^{''}\vee(t^{g_{2,b}}_{2,b})^{''}\bigr)\Bigr)\notag\\
	&+\sum\Bigl(\bigl((t^{g_{1,a}}_{1,a})^{'}\ast_h (t^{g_{1,b}}_{1,b})^{'}\bigr)\ast_h t^{g_2}_{2}\Bigr)\otimes\left((t^{g_{1,a}}_{1,a})^{''}\bridge(t^{g_{1,b}}_{1,b})^{''}\right)\notag\\
	&+\sum \Bigl(t^{g_{1}}_{1}\ast_h \bigl((t^{g_{2,a}}_{2,a})^{'}\ast_h (t^{g_{2,b}}_{2,b})^{'}\bigr)\Bigr)\otimes
	\left((t^{g_{2,a}}_{2,a})^{''}\vee (t^{g_{2,b}}_{2,b})^{''}\right)\notag\\
	&+\left(t^{g_1}_1\ast_h t^{g_{2}}_{2}\right)\otimes 1.
\end{align}
This is finally easily seen to be equal to the second line of (\ref{eq:homom}).

The co-unit $\epsilon: \mathcal{A}\rightarrow k$ is given by $\epsilon(|)=1, \epsilon(t^g)=0$ for $t^g\ne|$.

Since the bi-algebra is graded and connected it has an antipode and so is automatically a Hopf algebra \footnote{see \cite{10.2307/1970615}, page 259. Note that what is called a Hopf algebra there is a currently named a bi-algebra and a ``conjugation'' is nowadays called the antipode.}. We limit ourselves to give the expression of the antipode on the primitive elements: $S(\tree)=-\tree,S(\oneloop)=-\oneloop$.
\end{proof}

\subsection{The sub-algebra of topological recursion}Now we take the quotient algebra of regular graphs:
\begin{defn}The algebra $\mathcal{A}_\text{Reg}^h$ of regular graphs is obtained by projecting onto 0 all irregular graphs. 
\begin{equation}
\mathcal{A}_\text{Reg}^h=k[Y^\infty]_h/\{\text{irreg.}\}
\end{equation}
\end{defn}
This will continue to be an associative algebra with the same product (\ref{eq:quantumproduct}) with the rule that every time we try to contract a leaf to produce a loop that is already contracted in another loop we get 0. We have:
\begin{lem}
	In $\mathcal{A}_\text{Reg}^h$, $\oneloop$ is nilpotent.
\end{lem}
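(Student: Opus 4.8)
The plan is to read the lemma straight off the product computation $\oneloop\ast_h\oneloop = |\bridge\oneloop+\oneloop\bridge| = \twoonelooponeirreg+\onetwolooponeirreg$ already carried out above from the recursion (\ref{eq:quantumproduct}). First I would redo that single computation inside $\mathcal{A}_\text{Reg}^h$ and check that each of the two resulting summands is an irregular graph: its underlying planar binary tree has order $2$, while it carries genus $g=2$ — the loop of the inner $\oneloop$ together with the new loop created by the outer $\bridge$ — and a single free leaf, $k=1$, so the constraint $\chi=2-2g-k$ with $\chi=-2$ fails, $-2\neq 2-4-1=-3$. This is exactly the observation recorded in the text when $\twoonelooponeirreg$ and $\onetwolooponeirreg$ were first introduced.

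Since in $\mathcal{A}_\text{Reg}^h$ every irregular graph is identified with $0$, this gives $\oneloop\ast_h\oneloop=0$ in $\mathcal{A}_\text{Reg}^h$; equivalently, the $\bridge$ applied to either copy of $\oneloop$ tries to contract a leaf that already bounds the loop of that copy, and the projection rule returns $0$ at once. Hence $\oneloop$ squares to zero and is in particular nilpotent. For the nilpotency index I would add the remark that $\oneloop$ itself is \emph{regular} — its underlying tree has order $1$ and $\chi=-1=2-2\cdot1-1$ — so $\oneloop\neq0$ in $\mathcal{A}_\text{Reg}^h$ and the index is exactly $2$.

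There is no genuine obstacle here; the only point that deserves a line of justification is the compatibility of the quotient with the product, namely that evaluating $\oneloop\ast_h\oneloop$ inside $\mathcal{A}_\text{Reg}^h$ coincides with evaluating it in $k[Y^\infty]_h$ and then projecting. That is precisely the content of the remark made just before the statement — that (\ref{eq:quantumproduct}) descends to $\mathcal{A}_\text{Reg}^h$ under the rule that any attempt to contract an already-contracted leaf yields $0$ — and once it is granted, the lemma follows in one line from the earlier calculation.
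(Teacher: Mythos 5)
Your proposal is correct and follows the paper's own proof exactly: the paper proves the lemma by the single computation $\oneloop\ast_h\oneloop=\onetwolooponeirreg+\twoonelooponeirreg\sim 0$ in $\mathcal{A}_\text{Reg}^h$, which is precisely your argument. Your added remarks — checking irregularity via the failure of $\chi=2-2g-k$, noting $\oneloop\neq 0$ so the index is exactly $2$, and observing that the product descends to the quotient — are consistent with the text and only elaborate on what the paper leaves implicit.
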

\begin{proof}
	$\oneloop\ast_h\oneloop=\onetwolooponeirreg+\twoonelooponeirreg\sim0$ in $\mathcal{A}_\text{Reg}^h$.
\end{proof}
However the co-product given by Definition \ref{def:co-product} is no longer an homomorphism as can be seen from the fact that $\Delta_h\left(\oneloop\ast_h\oneloop\right)=0$ doesn't equal $\Delta_h\left(\oneloop\right)\ast_h\Delta_h\left(\oneloop\right)\neq0$. So $\mathcal{A}_\text{Reg}^h$ is a quotient algebra of $k[Y^\infty]_h$ but not a Hopf algebra, at least with a co-product induced by the one we have been considering. 

We discuss the space of solutions of (\ref{eq:toprec}). In $g=0$ we have a sub-algebra $\mathcal{H}^0_{\text{TopRec}}\subset\mathcal{A}_{\text{Reg}}^h$, that is also a Hopf sub-algebra of Loday-Ronco's Hopf algebra, as the set of solutions of the particular case $g=0$ of Topological Recursion formula\footnote{see (\ref{eq:toprec}).}
\begin{align}\label{eq:toprecgenus0}
	&W_{k+1}^0(p,K)=\sum_{\text{branch points }\alpha}\text{Res}_{q\rightarrow \alpha}K_p(q,\bar{q})\notag\\
	&\left(\sum_{L\cup M=K} W^0_{|L|+1}(q,L)W^{0}_{|M|+1}(\bar{q},M)\right).
\end{align}
In fact, as we have seen in section \ref{sec:toprecg0}, each correlation function $W_{k+1}^0(p,K)$ with associated Euler characteristic $\chi=1-k$ is represented by the sum of all pbt of order $-\chi=k-1$ which in turn is given by the $\ast$ product of $k-1$ factors of $\tree$. In this way the $g=0$ space of solutions $\mathcal{H}^0_{\text{TopRec}}$ is the free algebra on one generator $\tree$. It is a Hopf algebra isomorphic to $k[X]$, the algebra of polynomials in one variable. Hence the ``classical'' nature of the tree expansion is expressed on the commutativity of the Hopf algebra of solutions. 
 
In the case $g>0$ we introduce the generator $\oneloop$ besides $\tree$ and each correlation function $W_{k+1}^g(p,K)$ associated with an Euler characteristic $\chi=1-2g-k$ is again given by the linear combination of all regular graphs of genus $g$ and with underlying trees of order $-\chi=k+2g-1$. This sum is generated by the $\ast_h$ product of $-\chi$ factors of $\tree$ and $\oneloop$ such that there are no two consecutive factors of $\oneloop$ and where the number of loops is equal to the number of $\oneloop$ factors. In other words, the non-commutative sub-algebra $\mathcal{A}^h_{\text{TopRec}}\subset\mathcal{A}_{\text{Reg}}^h$,  presented by two generators $\tree$ and $\oneloop$ and the relation $\oneloop^2=0$, is a module of words with these generators as letters, and these words correspond to sums of regular graphs of order $-\chi$ with loops $g$ of any order  and $k+1$ external legs or labels, up to the relation $\chi=1-2g-k$ and with the number of loops given by the number of factors of the letter $\oneloop$. 
\begin{defn}
	The algebra of graphs $\mathcal{A}^h_{\text{TopRec}}$ that contains solutions of Topological Recursion is a sub-algebra of the quotient algebra of regular graphs $\mathcal{A}_{\text{Reg}}^h$ and is generated by $\tree$ and $\oneloop$ as an non-commutative polynomial algebra over $k$ with the $\ast_h$ product, under the relation $\oneloop\ast_h\oneloop=0$.
\end{defn}
For instance, the word $\oneloop\ast_h\tree\ast_h\oneloop$ is shown in figure \ref{fig:freeword1}.
\begin{figure}[h]
	\begin{tikzpicture}
	\draw[thick] (1,-0.5) -- (1,1) -- (0,2) -- (0.2,1.8)--(0.4,2)--(0.2,1.8)-- (0.5,1.5)--(1,2)--(0.5,1.5)-- (1,1) -- (2,2);\draw (2.5,1) node{\textbf{{\Large $+$}}}; \draw[thick] (4,-0.5) -- (4,1) -- (3,2) -- (4,1) -- (5,2)--(4.8,1.8)--(4.6,2)--(4.8,1.8)--(4.5,1.5)--(4,2);\draw (5.5,1) node{\textbf{{\Large $+$}}}; \draw[thick] (7,-0.5) -- (7,1) -- (6,2) -- (7,1) -- (8,2)--(7.5,1.5)--(7.25,1.75)--(7.5,2)--(7.25,1.75)--(7,2);\draw[thick] (2,2.1) arc (45:135:0.67cm);\draw[thick] (3.95,2.1) arc (30:150:0.58cm);\draw[thick] (6.95,2.1) arc (30:150:0.58cm);\draw[thick] (0.4,2.1) arc (30:150:0.25cm);
	\draw[thick] (5,2.1) arc (30:150:0.25cm);\draw[thick] (8,2.1) arc (30:150:0.25cm);
	\end{tikzpicture}
	\begin{tikzpicture}
	\draw (-0.5,1) node{\textbf{{\Large $+$}}};\draw[thick] (1,-0.5) -- (1,1) -- (0,2) -- (0.35,1.65)--(0.65,2)--(0.35,1.65)-- (1,1) --(1.65,1.65)--(1.35,2)--(1.65,1.65) -- (2,2);\draw (2.5,1);  \draw (3.5,1) node{\textbf{{\Large $+$}}}; \draw[thick] (7,-0.5) -- (7,1) -- (6,2) -- (6.5,1.5)--(6.75,1.75)--(6.5,2)--(6.75,1.75)--(7,2)-- (6.5,1.5) -- (7,1)-- (8,2);
	\draw[thick] (2,2.1) arc (30:150:0.35cm);\draw[thick] (8,2.1) arc (45:135:0.67cm);\draw[thick] (0.6,2.1) arc (30:150:0.35cm);\draw[thick] (6.5,2.1) arc (30:150:0.3cm);
	\end{tikzpicture}
	\caption{Word $\protect\oneloop\ast_h\protect\tree\ast_h\protect\oneloop$ that corresponds to the correlation function $W_1^2(p)$.}\label{fig:freeword1}
\end{figure}
Also the words $\oneloop\ast_h\tree\ast_h\tree,\tree\ast_h\oneloop\ast_h\tree$ and $\tree\ast_h\tree\ast_h\oneloop$ give all fifteen graphs of order 3 with one loop. In all there are 25 graphs of order 3, with 0, 1 and 2 loops.

A generating function $F:k^2\rightarrow \mathcal{A}^h_{\text{TopRec}}$ for these sums of graphs is easily obtained:
\begin{align}
F(a_1,a_2)&=\exp\left(a_1\tree+a_2\oneloop\right)\notag\\
&=1+a_1\tree+a_2\oneloop+\frac{1}{2}\left(a_1^2\tree\ast_h\tree+a_1a_2(\tree\ast_h\oneloop+\oneloop\ast_h\tree)\right)\notag\\
&+\frac{1}{6}\left(a_1^3\tree^3+a_1^2a_2\left(\tree^2\ast_h\oneloop+\tree\ast_h\oneloop\ast_h\tree+\oneloop\ast_h\tree^2\right)\right.\notag\\
&\left.+a_1a_2^2\oneloop\ast_h\tree\ast_h\oneloop\right)\notag+\dots
\end{align}
The module $\textbf{Corr}_{\mathbf{h}}^{\mathbf{(n)}}$ can thus be given a ring structure induced by the algebra $\mathcal{A}^h_{\text{TopRec}}$. A full correlation function $W^n\in\textbf{Corr}_{\mathbf{h}}^{\mathbf{(n)}}$ of Euler characteristic $\chi=-n$ is an expansion in $h$ of coefficients which are correlation functions with $k+1$ variables or punctures and are represented in $\mathcal{A}^h_{\text{TopRec}}$ by products of $p$ factors of $\tree$ and $q$ factors of $\oneloop$ with $\chi=2-2q-k$ and $-\chi=p+q$:
\begin{align}
\psi(W^n)=&\tree\ast\tree\dots\ast\tree + h\oneloop\ast_h\tree\ast_h\tree\dots\ast_h\tree+\notag\\
+&h\tree\ast_h\oneloop\ast_h\tree\dots\ast_h\tree+\dots+h\tree\ast_h\tree\ast_h\dots\ast_h\oneloop\ast_h\tree\notag\\
+&h\tree\ast_h\dots\ast_h\tree\ast_h\oneloop+\dots+h^{q_{\text{max}}}\oneloop\ast_h\tree\ast_h\oneloop\ast_h\dots\oneloop\ast_h\tree\ast_h\oneloop\notag\\
&(\text{ or }\dots+h^{q_{\text{max}}}\oneloop\ast_h\tree\ast_h\oneloop\ast_h\dots\tree\ast_h\oneloop\ast_h\tree\notag\\
&\hspace{2cm}+h^{q_{\text{max}}}\tree\ast_h\oneloop\ast_h\tree\ast_h\dots\ast_h\oneloop\ast_h\tree\ast_h\oneloop).
\end{align}

\section{Cohomology of the quantum algebra}
In \cite{MR1817703} Frabetti studies the simplicial properties of pbt, showing that even though its homology is trivial, they have interesting simplicial features. Namely, face or partial border operators $d_i, i=0\dots n$ that maps $Y^{n}$ into $Y^{n-1}$, are defined by erasing the leaf in position $i$; for all $i,j$ from $0$ to $n$ these maps satisfy the relations
\begin{equation}\label{eq:partialbor}	
	d_id_j=d_{j-1}d_i,i<j.
\end{equation}
From $d_i$ a full face or border operator $d:Y^{n} \to Y^{n-1}$
\begin{equation}
	d=\sum_{i=0}^{n}(-1)^id_i
\end{equation}
is constructed. 

It's easy to show using (\ref{eq:partialbor}) that $d^2=0$ and so $(\oplus_{n=0}^\infty k[Y^n],d)$ is a chain complex. At the same time degeneracy maps $s_i:Y^{n}\to Y^{n+1}$ are defined by bifurcating the leaf at position $i$ for $i=0,\dots, n$, that is, replacing the leaf $\setminus$ or / by $\tree$. Its is shown in \cite{MR1817703} that $(\oplus_{n=0}^\infty k[Y^n],d_i,s_i )$ is an almost simplicial complex, in the sense that the maps $(d_i,s_i)$ satisfy the relations
\begin{equation}
d_is_j=\left\{	
	\begin{array}{lll}
		s_{j-1}d_i, i<j,\\
		Id, i=j,j+1,\\
		s_jd_{i-1}, i>j+1,
	\end{array}\right.
\end{equation}
and also the relation
\begin{equation}
	s_is_j=s_{j+1}s_i,
\end{equation}
except for $i=j$, that is, the relation $s_is_i=s_{i+1}s_i$ is not necessarily verified for all $i$.
\begin{figure}
	\includegraphics{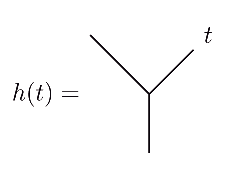}
	\caption{The extra-degeneracy operator.}
	\label{fig:extradegen}
\end{figure}
In addition an extra-degeneracy operator $h:Y^n\to Y^{n+1}$ is defined (see fig. \ref{fig:extradegen}) such that it satisfies the relation
\begin{equation}
	dh+hd=Id.
\end{equation}
This in turn implies that $(\oplus_{n=0}^\infty k[Y^n],d)$ is acyclic. In fact, for a tree $t\in Y^n$ such that $dt=0$ we have from the previous identity that $t=dh(t)$, that is, its homology is trivial.
 
In \cite{1751-8121-48-44-445205} we considered the operator $\prescript{}{i}{\leftrightarrow}_{i+1}$ that contracts the leaves $i$ and $i+1$ in a pbt or a graph of genus $g$ obtained from it, creating a graph of genus 1 or genus $g+1$ respectively. If there isn't a pair of free consecutive leaves in position $i$ and $i+1$ then the result of applying this operator is 0. This contraction operator allows to build a cohomology on $k[Y^\infty]_h$ by defining a differential $d_h:k[(Y^n)^g]\rightarrow k[(Y^n)^{g+1}]$:
\begin{equation}\label{eq:differential}
d_h=\sum_{i=0}^n (-1)^{i+g_i}\prescript{}{i}{\leftrightarrow}_{i+1},
\end{equation}
where $g_i$ is the number of loops before the leaf $i$. 
\begin{lem}
	We have $d_h\circ d_h=0$.
\end{lem}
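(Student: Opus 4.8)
The plan is to run the classical argument showing that a simplicial differential squares to zero, with the relation (\ref{eq:partialbor}) replaced by an analogous commutation identity among the contraction operators $\prescript{}{i}{\leftrightarrow}_{i+1}$ that already absorbs the needed sign. Throughout write $L_i:=\prescript{}{i}{\leftrightarrow}_{i+1}$. Expanding the definition (\ref{eq:differential}) on a generator $t^g\in(Y^n)^g$,
\begin{equation*}
 d_h\circ d_h(t^g)=\sum_{i,j}(-1)^{i+g_i}\,(-1)^{j+\tilde{g}_j}\;L_j\circ L_i(t^g),
\end{equation*}
where $g_i$ is computed in $t^g$ and $\tilde{g}_j$ in $L_i(t^g)$. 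First I would pin down which summands survive. A composite $L_jL_i(t^g)$ is nonzero only when $L_i$ contracts a pair of consecutive free leaves of $t^g$ and $L_j$ then contracts a pair of consecutive free leaves of the resulting graph. Since contracting a pair consumes those two leaves and inserts a loop between the two leaves flanking them, the second pair must be disjoint from the first; moreover the newly created loop cannot be ``straddled'' — joining the old leaves $i-1$ and $i+2$ is not allowed, as they are no longer consecutive, the new loop now lying between them, so $L_{i-1}\circ L_i=0$. Hence every nonzero summand is indexed by an ordered choice of two disjoint consecutive-leaf pairs $P,Q$ of $t^g$, one lying entirely to the left of the other, and all of them produce the same graph $T$: the graph obtained from $t^g$ by placing a loop on $P$ and a loop on $Q$.

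Next I would pair the surviving summands. Fix such an unordered pair $\{P,Q\}$, with $P$ occupying the original leaf positions $a,a+1$ and $Q$ occupying $b,b+1$, $b\ge a+2$. Exactly two summands produce $T$: the one with $i=a$, $j=b-2$ (contract $P$, then $Q$) and the one with $i=b$, $j=a$ (contract $Q$, then $P$). Let $g_a$, $g_b$ be the number of loops of $t^g$ to the left of leaf $a$, leaf $b$ respectively. Contracting $P$ first inserts a loop to the left of $Q$, so in $L_a(t^g)$ the leaf formerly numbered $b$ is renumbered $b-2$ and has $g_b+1$ loops to its left; the coefficient of $T$ along this route is $(-1)^{a+g_a}(-1)^{(b-2)+(g_b+1)}=(-1)^{a+b+g_a+g_b-1}$. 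Contracting $Q$ first leaves leaf $a$ and the loops to its left untouched, so the coefficient along the other route is $(-1)^{b+g_b}(-1)^{a+g_a}=(-1)^{a+b+g_a+g_b}$. These differ by a sign and cancel — exactly as the index shift $j\mapsto j-1$ supplies the cancelling sign in the proof that $d^2=0$ for the Frabetti differential recalled above. Since every nonzero term of $d_h\circ d_h(t^g)$ has a unique cancelling partner, $d_h\circ d_h(t^g)=0$ on every generator, hence $d_h\circ d_h=0$.

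The crux, I expect, is the characterization of the nonzero compositions — specifically the observation that the straddling composite $L_{i-1}\circ L_i$ must vanish. This is precisely the planarity constraint encoded in the requirement that $\prescript{}{i}{\leftrightarrow}_{i+1}$ act only on a pair of \emph{consecutive} free leaves: were a straddling double loop permitted, it could not be obtained by contracting the two pairs in the opposite order, and so would have no partner to cancel against. Everything after that point is sign bookkeeping identical in spirit to the classical computation; the only mild care needed is tracking how the leaf labels and the loop-counts $g_i$ change under a single contraction, which is what the shift $b\rightsquigarrow b-2$ together with $g_b\rightsquigarrow g_b+1$ records above.
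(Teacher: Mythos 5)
Your overall strategy (split the double sum, pair the two orders of contraction, check the signs cancel) is the paper's strategy, and your sign bookkeeping for two disjoint pairs is consistent with the paper's: the paper writes $d^{g_i}=(-1)^{g_i}\prescript{}{i}{\leftrightarrow}_{i+1}$, observes $d^{g_i}d^{g_j}=-d^{g_j}d^{g_i}$ for $i<j$ (because performing the lower contraction first adds one loop to the count $g_j$) together with $d^{g_i}d^{g_i}=0$, and then splits the sum. However, there is a genuine gap in your characterization of which composites survive. You claim that the second pair ``must be disjoint from the first'' because a contraction consumes its two leaves, and that the straddling composite vanishes, so that only pairs lying strictly side by side contribute. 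In the paper's setting this is false: a contraction may involve a leaf that already belongs to a loop, and the result is one of the irregular graphs, which are nonzero elements of $k[Y^\infty]_h$. The paper computes explicitly $d_h\onetwoloopone=\onetwolooponeirreg\neq 0$, and the whole subsequent discussion depends on this (elements such as $\tree\ast_h\oneloop$ are \emph{not} closed in $k[Y^\infty]_h$ and only become closed after passing to $\mathcal{A}_\text{Reg}^h$, where irregular graphs are set to zero; this is what makes $H^{2,1}$ nontrivial). If overlapping contractions were automatically zero, as you assert, those statements would collapse.

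Consequently $d_h\circ d_h$ contains nonzero summands in which the two contracted pairs share a leaf (e.g.\ $(i,i+1)$ followed by $(i+1,i+2)$, giving an irregular graph with two loops at leaf $i+1$), and your argument never shows that these cancel between the two orders of application. That cancellation is exactly what the paper's relation $d^{g_i}d^{g_j}=-d^{g_j}d^{g_i}$, $i<j$, is asserted to cover uniformly (with the indices kept in the original labelling, the parity flip coming solely from the loop count $g_j$); your proof would be repaired by dropping the disjointness claim and verifying the same sign flip for overlapping pairs, i.e.\ by checking that the newly created loop on $(i,i+1)$ is counted in $g_{i+1}$ when the second contraction is performed. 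As it stands, the proposal proves $d_h^2=0$ only for a differential that never produces irregular graphs, which is not the differential the paper defines.
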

\begin{proof}
	Let us write $d^{g_i}=(-1)^{g_i}\prescript{}{i}{\leftrightarrow}_{i+1}$. Obviously $g_0=0$ and $g_n=g$ for a graph $t^g\in k [(Y^n)^{g}]$. We have the relations
	\begin{equation}
		\begin{array}{ll}
			&d^{g_i}d^{g_j}=-d^{g_j}d^{g_i},  i<j,\\
			&d^{g_i}d^{g_j}=0, i=j. 
		\end{array}		
	\end{equation}
The first relation is a consequence of the fact that for $j>i$ when acting first with $d^{g_i}$ and then with $d^{g_j}$ we count one more loop in $g_j$ than acting first with $d^{g_j}$ and after with $d^{g_i}$. The second results from the impossibility of producing a second loop between two consecutive leafs that are already contracted to produce a loop.
Then $d_h=\sum_{i=0}^n (-1)^{i}d^{g_i}$ gives
\begin{align}
	d_h^2&=\sum_{i,j=0}^n (-1)^{i}d^{g_i}(-1)^{j}d^{g_j}\notag\\
	&=\sum_{i<j,0}^n (-1)^{i+j}d^{g_i}d^{g_j}+\sum_{i>j,0}^n (-1)^{i+j}d^{g_i}d^{g_j}\notag\\
	&=-\sum_{i>j,0}^n (-1)^{i+j}d^{g_i}d^{g_j}+\sum_{i>j,0}^n (-1)^{i+j}d^{g_i}d^{g_j}\notag\\
	&=0.
\end{align}
\end{proof}
This shows that $(k[(Y^\infty)]_h,d_h)$ is a co-chain complex.

It is clear that $d_h\tree=\oneloop$ and $d_h\oneloop=0$ so that the cohomology is trivial for $n=g=1$. For $n=2,g=1$ things are more interesting:
\begin{equation}\label{eq:donetwo}
d_h\onetwo=\onetwoloopone-\onetwolooptwo,
\end{equation}
which implies that $\onetwoloopone$ and $\onetwolooptwo$ are co-homologous. 
In the same way
\begin{equation}\label{eq:dtwoone}
	d_h\twoone=\twooneloopone-\twoonelooptwo.
\end{equation}	
This implies that
\begin{equation}\label{eq:cohomologous}
	d_h(\tree\ast_h\tree)+\twoonelooptwo+\onetwolooptwo=\onetwoloopone+\twooneloopone.
\end{equation}
However, there is no reason to assume that $\onetwoloopone$, $\onetwolooptwo$, $\twooneloopone$ and $\twoonelooptwo$ are closed. For instance,
\begin{equation}
	d_h\onetwoloopone=\onetwolooponeirreg,
\end{equation}
which is not 0 in $k[Y^\infty]_h$. Hence, let us work on $\mathcal{A}_\text{Reg}^h$ where $\oneloop\ast_h\oneloop=0$ and compute
\begin{equation}\label{eq:oneloopclosed}
	\begin{array}{ll}
		d_h(\tree\ast_h\oneloop)=0\Longrightarrow\tree\ast_h\oneloop=\onetwolooptwo+\twoonelooptwo \text{ is closed;}\\
		d_h(\oneloop\ast_h\tree)=0\Longrightarrow\oneloop\ast_h\tree=\onetwoloopone+\twooneloopone\text{ is closed.}
	\end{array}
\end{equation}
Restricting further to $\mathcal{A}_\text{TopRec}^h$ and writing
\begin{align}
&\mathcal{A}_\text{TopRec}^h=\oplus_{n=0}^\infty\oplus_{g,2g+k-1=n}( \mathcal{A}_\text{TopRec}^h)^{n,g},\\
&Z\left((\mathcal{A}_\text{TopRec}^h)^{n,g}\right)=\left\{t^g\in(\mathcal{A}_\text{TopRec}^h)^{n,g}:d_h(t^g)=0\right\},\\
&B\left((\mathcal{A}_\text{TopRec}^h)^{n,g}\right)=\left\{t^g\in(\mathcal{A}_\text{TopRec}^h)^{n,g}:t^g=d_h(s^{g-1}),s^{g-1}\in(\mathcal{A}_\text{TopRec}^h)^{n,g-1}\right\},
\end{align} 
we see from
\begin{align}
	&(\mathcal{A}_\text{TopRec}^h)^{2,0}=\left\{t\in\mathcal{A}_\text{TopRec}^h:t=a\cdot\tree\ast\tree,a\in k\right\},\\ &(\mathcal{A}_\text{TopRec}^h)^{2,1}=\left\{t^1\in\mathcal{A}_\text{TopRec}^h:t^1=a\cdot\tree\ast_h\oneloop+b\cdot\oneloop\ast_h\tree,a,b\in k\right\}
\end{align}
and also (\ref{eq:cohomologous}) and (\ref{eq:oneloopclosed}) that
\begin{align}
	&Z\left((\mathcal{A}_\text{TopRec}^h)^{2,1}\right)=\left\{t^1\in(\mathcal{A}_\text{TopRec}^h)^{2,1}:t^1=a\cdot\tree\ast_h\oneloop+b\cdot\oneloop\ast_h\tree,a,b\in k\right\},\\
	&B\left((\mathcal{A}_\text{TopRec}^h)^{2,1}\right)=\left\{t^1\in(\mathcal{A}_\text{TopRec}^h)^{2,1}:t^1=a\cdot\left(\oneloop\ast_h\tree-\tree\ast_h\oneloop\right),a\in k\right\},
\end{align} 
so that we have the following
\begin{thm}
\begin{equation}
	H^{2,1}\left((\mathcal{A}_\text{TopRec}^h),k\right)=\left\{ t^1\in (\mathcal{A}_\text{TopRec}^h)^{2,1}:\oneloop\ast_h\tree=\tree\ast_h\oneloop \right\},
\end{equation}
\end{thm}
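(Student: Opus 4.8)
The plan is to prove the theorem as a direct quotient computation, $H^{2,1}\bigl((\mathcal{A}_\text{TopRec}^h),k\bigr)=Z\bigl((\mathcal{A}_\text{TopRec}^h)^{2,1}\bigr)/B\bigl((\mathcal{A}_\text{TopRec}^h)^{2,1}\bigr)$, relying on the fact that in this bidegree both the cocycle and coboundary spaces are finite-dimensional and the relevant values of $d_h$ have already been evaluated above. The only work is to pin down those two subspaces precisely and then read off the quotient.

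First I would record the cocycles. The space $(\mathcal{A}_\text{TopRec}^h)^{2,1}$ is the two-dimensional $k$-vector space with basis $\{\tree\ast_h\oneloop,\ \oneloop\ast_h\tree\}$, as displayed before the statement. By the computations in \eqref{eq:oneloopclosed}, both basis elements are $d_h$-closed once we work in $\mathcal{A}_\text{TopRec}^h$: here it is essential that the relation $\oneloop^2=0$ holds, so that the \emph{a priori} obstructing irregular summands $\onetwolooponeirreg,\twoonelooponeirreg$ occurring in $d_h\onetwoloopone$ (and its mirror) are killed. Hence $Z\bigl((\mathcal{A}_\text{TopRec}^h)^{2,1}\bigr)$ is the whole two-dimensional space. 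Note this genuinely uses the passage to $\mathcal{A}_\text{TopRec}^h$: in $k[Y^\infty]_h$ neither class is a cocycle, since $d_h\onetwoloopone=\onetwolooponeirreg\neq 0$ there.

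Next I would identify the coboundaries. The bidegree-$(2,0)$ part $(\mathcal{A}_\text{TopRec}^h)^{2,0}$ is the line $k\cdot\tree\ast\tree$, so $B\bigl((\mathcal{A}_\text{TopRec}^h)^{2,1}\bigr)$ is spanned by $d_h(\tree\ast_h\tree)$. Combining \eqref{eq:donetwo}, \eqref{eq:dtwoone} and \eqref{eq:cohomologous} gives $d_h(\tree\ast_h\tree)=\onetwoloopone+\twooneloopone-\onetwolooptwo-\twoonelooptwo$, and the identifications $\oneloop\ast_h\tree=\onetwoloopone+\twooneloopone$ and $\tree\ast_h\oneloop=\onetwolooptwo+\twoonelooptwo$ of \eqref{eq:oneloopclosed} then yield $d_h(\tree\ast_h\tree)=\oneloop\ast_h\tree-\tree\ast_h\oneloop$. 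Therefore $B\bigl((\mathcal{A}_\text{TopRec}^h)^{2,1}\bigr)=\{a(\oneloop\ast_h\tree-\tree\ast_h\oneloop):a\in k\}$, a one-dimensional subspace of the two-dimensional cocycle space. Finally, forming $Z/B$ produces a one-dimensional space in which the classes of $\oneloop\ast_h\tree$ and $\tree\ast_h\oneloop$ are identified; this is exactly the subspace of $(\mathcal{A}_\text{TopRec}^h)^{2,1}$ described by imposing the relation $\oneloop\ast_h\tree=\tree\ast_h\oneloop$, which is the asserted formula for $H^{2,1}$. I do not expect any genuine obstacle here beyond the bookkeeping already done; the one point needing care is the role of $\oneloop^2=0$, which is simultaneously what makes the two generators closed and what prevents the quotient from collapsing or behaving differently than in $k[Y^\infty]_h$.
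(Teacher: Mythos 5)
Your proposal is correct and follows essentially the same route as the paper: it identifies $Z\bigl((\mathcal{A}_\text{TopRec}^h)^{2,1}\bigr)$ as the full two-dimensional span of $\tree\ast_h\oneloop$ and $\oneloop\ast_h\tree$ (using the closedness computations valid once $\oneloop\ast_h\oneloop=0$) and $B\bigl((\mathcal{A}_\text{TopRec}^h)^{2,1}\bigr)$ as the line spanned by $d_h(\tree\ast_h\tree)=\oneloop\ast_h\tree-\tree\ast_h\oneloop$, then reads off the quotient, exactly as the paper does via its displayed descriptions of $Z$ and $B$. Your added remark on the essential role of the relation $\oneloop^2=0$ matches the paper's own passage to $\mathcal{A}_\text{Reg}^h$ before checking closedness.
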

\begin{cor}
\begin{equation}
	\text{dim}\left\{H^{2,1}\left(\mathcal{A}_\text{TopRec}^h,k\right)\right\}=1.
\end{equation}
\end{cor}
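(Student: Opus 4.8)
The plan is to compute $H^{2,1}\bigl(\mathcal{A}_\text{TopRec}^h,k\bigr)$ directly from its definition as the quotient $Z\bigl((\mathcal{A}_\text{TopRec}^h)^{2,1}\bigr)\big/B\bigl((\mathcal{A}_\text{TopRec}^h)^{2,1}\bigr)$, using the bidegree-$(2,0)$ and bidegree-$(2,1)$ pieces of $\mathcal{A}_\text{TopRec}^h$ written down just above the statement. First I would record the three structural facts that make the computation finite: $(\mathcal{A}_\text{TopRec}^h)^{2,0}=k\cdot(\tree\ast_h\tree)$ is one-dimensional; $(\mathcal{A}_\text{TopRec}^h)^{2,1}$ is the two-dimensional $k$-module with basis $\tree\ast_h\oneloop$ and $\oneloop\ast_h\tree$; and the differential $d_h$ of (\ref{eq:differential}) restricts to $\mathcal{A}_\text{TopRec}^h$ in these bidegrees. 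For the last point I would invoke (\ref{eq:cohomologous}): regrouping $\onetwoloopone+\twooneloopone=\oneloop\ast_h\tree$ and $\onetwolooptwo+\twoonelooptwo=\tree\ast_h\oneloop$ turns that identity into $d_h(\tree\ast_h\tree)=\oneloop\ast_h\tree-\tree\ast_h\oneloop$, which indeed lies in $(\mathcal{A}_\text{TopRec}^h)^{2,1}$, while (\ref{eq:oneloopclosed}) says $d_h$ annihilates both basis vectors of $(\mathcal{A}_\text{TopRec}^h)^{2,1}$.

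From here the two subspaces drop out. Since (\ref{eq:oneloopclosed}) shows $\tree\ast_h\oneloop$ and $\oneloop\ast_h\tree$ are both closed, $Z\bigl((\mathcal{A}_\text{TopRec}^h)^{2,1}\bigr)$ is all of $(\mathcal{A}_\text{TopRec}^h)^{2,1}$. Since $(\mathcal{A}_\text{TopRec}^h)^{2,0}$ is spanned by $\tree\ast_h\tree$, we get $B\bigl((\mathcal{A}_\text{TopRec}^h)^{2,1}\bigr)=k\cdot d_h(\tree\ast_h\tree)=k\cdot(\oneloop\ast_h\tree-\tree\ast_h\oneloop)$, which is one-dimensional and automatically contained in $Z$ because $d_h^2=0$. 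Forming the quotient then gives a cohomology group in which the classes of $\oneloop\ast_h\tree$ and $\tree\ast_h\oneloop$ are identified, i.e.\ exactly the description in the statement; the corollary follows at once since $\dim H^{2,1}=\dim Z-\dim B=2-1=1$.

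The genuinely substantive inputs, (\ref{eq:cohomologous}) and (\ref{eq:oneloopclosed}), are already established, so what remains is bookkeeping; the one place where care is needed is justifying the structural facts, not the quotient itself. The delicate one is the closedness $d_h(\tree\ast_h\oneloop)=d_h(\oneloop\ast_h\tree)=0$: here one is implicitly working inside $\mathcal{A}_\text{Reg}^h$, where the would-be irregular loop-on-loop graphs produced by $d_h$ are projected to zero (this is the same mechanism as the nilpotency $\oneloop\ast_h\oneloop\sim 0$), and the statement would fail in $k[Y^\infty]_h$, so the complex under consideration must be explicitly the one carried by $\mathcal{A}_\text{TopRec}^h\subset\mathcal{A}_\text{Reg}^h$. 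The second fact, that $\tree\ast_h\tree$ spans the whole bidegree-$(2,0)$ part, needs only that $\mathcal{A}_\text{TopRec}^h$ is by construction the module of words in the two letters $\tree$ and $\oneloop$ subject to $\oneloop^2=0$, so no other generator of that bidegree and no higher-genus graph can contribute a further coboundary. Once both points are in hand the identification of $H^{2,1}$ closes.
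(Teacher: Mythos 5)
Your proposal is correct and follows essentially the same route as the paper: it computes $Z\bigl((\mathcal{A}_\text{TopRec}^h)^{2,1}\bigr)$ as the two-dimensional span of $\tree\ast_h\oneloop$ and $\oneloop\ast_h\tree$ via (\ref{eq:oneloopclosed}), and $B\bigl((\mathcal{A}_\text{TopRec}^h)^{2,1}\bigr)$ as the one-dimensional span of $d_h(\tree\ast_h\tree)=\oneloop\ast_h\tree-\tree\ast_h\oneloop$ via (\ref{eq:cohomologous}), giving $\dim H^{2,1}=2-1=1$. Your added care about working in $\mathcal{A}_\text{Reg}^h$ (so the irregular graphs are projected to zero) matches what the paper does implicitly before (\ref{eq:oneloopclosed}).
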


To compute the cohomology in higher degrees we suppose that the use of spectral sequences will be inevitable. Hopefully this will be the subject of a future work.

One last remark: adding (\ref{eq:donetwo}) and (\ref{eq:dtwoone}) we get
\begin{equation}
	d_h\left(\tree\ast\tree\right)=\left[\oneloop,\tree\right],
\end{equation}
with the commutator between two graphs given as usual by
\begin{equation}
	\left[t^{g_1},t^{g_2}\right]=t^{g_1}\ast_ht^{g_2}-t^{g_2}\ast_ht^{g_1}.
\end{equation}
This also shows that $d_h$ satisfies a graded Leibniz rule:
\begin{equation}
	d_h\left(\tree\ast\tree\right)=\left(d_h\tree\right)\ast_h\tree-\tree\ast_h\left(d_h\tree\right),
\end{equation}
a property that we have already used.

\section{Funding}
Partially supported by Funda\c{c}\~{a}o para a Ci\^{e}ncia e a Tecnologia, Portugal through projects PTDC/MAT-PUR/31089/2017 and UID/MAT/04459/2020.

%\printbibliography
\bibliographystyle{amsplain}
\bibliography{biblio-Loday}
\end{document}